\newcommand{\N}{\mathcal{S}} 
\newcommand{\M}{{\mathcal{M}_{g,1}^{\N}}} 
\newcommand{\C}{\mathcal{C}} 
\newcommand{\LO}{\mathcal{O}} 
\renewcommand{\k}{\mathbf{k}} 
\newcommand{\proj}{\mathbb{P}} 
\newcommand{\Nat}{\mathbb{N}} 
\renewcommand{\l}{\ell} 
\newcommand{\F}[1]{F_{#1}} 
\newcommand{\AF}{\mathbb{A}} 
\DeclareMathOperator*{\Spec}{Spec} %
\DeclareMathOperator*{\Proj}{Proj} %
\DeclareMathOperator*{\ewt}{ewt}
\DeclareMathOperator*{\End}{End}
\theoremstyle{change}
\newtheorem{thm}{Theorem}[section]
\newtheorem{lem}[thm]{Lemma}
\newtheorem{cor}[thm]{Corollary}
\newtheorem{prop}[thm]{Proposition}
\newtheorem{rmk}[thm]{Remark}
\newtheorem{syzlem}[thm]{Syzygy Lemma}
\newenvironment{proof}{\paragraph{Proof}}{\hfill$\square$\par\bigskip}
\newcommand\extrafootertext[1]{%
    \bgroup
    \renewcommand\thefootnote{\fnsymbol{footnote}}%
    \renewcommand\thempfootnote{\fnsymbol{mpfootnote}}%
    \footnotetext[0]{#1}%
    \egroup
}
\def\Y(#1){Y_{#1}}
\def\f(#1)(#2){f_{#1}^{(#2)}}
\def\g(#1)(#2){g_{#1}^{(#2)}}
\def\ff(#1)(#2){f_{#1,#2}}
\def\gg(#1)(#2){g_{#1,#2}}
\long \def\comment#1\endcomment{}
\begin{document}

\date{}
\title{On nonnegatively graded Weierstrass points}

\author{Andr\'e Contiero%
\thanks{The first author is partially supported by Funda\c c\~ao de Amparo \`a Pesquisa do Estado
de Minas Gerais (FAPEMIG) grant no. APQ-00798-18.}, Aislan Leal Fontes,
Jan Stevens and Jhon Quispe Vargas}

\maketitle

\vspace{-18pt}
\begin{abstract}
\noindent
We provide a new lower bound for the dimension of the moduli space of smooth pointed
curves with prescribed Weierstrass semigroup at the marked point, 
derived from the Deligne--Greuel formula and Pinkham's equivariant deformation theory.
Using Buchweitz's description of the first cohomology module of the cotangent complex for 
monomial curves, we show that our lower bound improves a recently one given by Pflueger. 
By allowing semigroups running over suitable families of symmetric semigroups of multiplicity six, 
we show that this new lower bound is attained,  and that the corresponding moduli spaces are non-empty
and of pure dimension.
\end{abstract}


\extrafootertext{{\em Keywords and Phrases:} Weierstrass Points, Moduli of Curves, Equivariant Deformation Theory.}

\extrafootertext{{\bf 2020 MSC:} 14H55, 14H10, 13D02, 14D15.}

\section{Introduction}
Given a smooth projective pointed curve $(\C,P)\in\mathcal{M}_{g,1}$ of genus $g>1$ defined over an algebraically
closed field $\k$, its associated Weiers\-trass semigroup $\N$ 
consists of the set of nonnegative integers $n$, called nongaps, such that there is a rational function on $\C$ whose
pole divisor is $nP$. Equivalently, $n$ is a nongap if and only if 
$\mathrm{H}^0(\C, \LO_{C}(n-1)P)\subsetneq\mathrm{H}^{0}(\C, \LO_{\C}(nP))$. The Riemann--Roch Theorem 
assures that the set of positive integers that are not in $\N$ (the set of gaps) has size exactly $g$.
In addition, we say that $P\in\C$ is a Weierstrass point if its associated Weierstrass semigroup is different from the 
ordinary one $\{0,g+1,g+2,\dots\}$.

For each numerical semigroup $\N$ of genus $g>1$, let $\M$ be 
the space parameterizing pointed smooth curves whose associa\-ted Weierstrass 
semigroup at the marked point is $\N$. It is well known that $\M$ can be empty, but if it is nonempty, 
since the $i$-th gap of a Weierstrass semigroup is 
an upper semicontinuous function, then  it is a locally  closed subspace of $\mathcal{M}_{g,1}$, so we get
get a stratification $\mathcal{M}_{g,1}=\sqcup_{\N} \M$, where $\N$ runs over all
the semigroups of genus $g$.

In this paper we focus on the problem of computing the dimension of $\M$. 
In characteristic zero there are two known  general bounds, namely the Rim--Vitulli upper
bound
\cite[\S 6]{RV77}, that dates back to the 70's, and more recently Pflueger's lower bound \cite{N16},
 \begin{equation}\label{lwb}
 \underbrace{3g-2-\ewt(\N)}_\text{Pflueger's bound}\leq \dim \M  \leq \underbrace{2g-2+\lambda(\N)}_\text{Rim--Vitulli bound},
 \end{equation} 
where $\ewt(\N)$ is the effective weight of $\N$ \cite[Def. 1.1]{N16} and $\lambda(\N)$ is the number of gaps $\l$ of $\N$ such that $\l+n\in\N$ 
whenever $n$ is a nongap. The methods used to get the above two bounds come from deformations of suitable curves:
while the Rim--Vitulli bound is derived from equivariant  deformations of monomial curves, Pflueger's bound
is derived by using deformations of stable curves and limit linear series. It is important to highlight that the two 
bounds in \eqref{lwb} coincide if and only if $\N$ is \textbf{negatively graded}, i.e. the first cohomology module $\mathrm{T}^{1}(\k[\N])$
of the cotangent complex associated to the semigroup algebra $\k[\N]$ is negatively graded, cf. \cite[Thm. 4.7]{RV77} and \cite[Prop. 2.11]{N16}.
 

The main result of this paper  (Theorem \ref{Stevens}) is that,   if
$\M$ is nonempty,
  \begin{equation}\label{comm}
 2g-2+\lambda(\N)-\dim\mathrm{T}^{1,+}(\k[\N])\leq\dim\mathrm{X},
 \end{equation}
for any irreducible component $\mathrm{X}$  of $\M$, 
where $\dim\mathrm{T}^{1,+}(\k[\N])$ 
stands for the dimension of the positive graded part of $\mathrm{T}^{1}(\k[\N])$. 
This bound is derived from
Pinkam's construction \cite{Pi74} of $\M$ and the Deligne--Greuel formula \cite{Del73,Gr82} for
smoothing components of curve singularities.
In addition,  using  Buchweitz' description \cite{Bu80} of $T^1$ for monomial curves, 
we show that the lower bound \eqref{comm} is not smaller than Pflueger's one (Proposition \ref{lbound}). 
 Hence, assuming that $\M$ is nonempty we can write
 \begin{equation}\label{ines}
3g-2-\ewt(\N)\leq 2g-2+\lambda(\N)-\dim\mathrm{T}^{1,+}(\k[\N])\leq \dim \M \leq 2g-2+\lambda(\N).
 \end{equation}
We show examples where our new lower bound does not attain the dimension of $\M$. 
Examples are also provided  where the first inequality in \eqref{ines} is strict.
In particular, for the semigroups  $\langle6,7,8\rangle$ and $\langle6,7,15\rangle$, Pflueger's bound does not provide the exact 
dimension of the $\M$, but our lower bound in Theorem \ref{Stevens} does.
These two particular
semigroups are symmetric, i.e. their largest gap are the biggest possible, namely $\l_g=2g-1$, suggesting that
symmetric semigroups can be an interesting class of semigroups to study the dimension of their associated moduli spaces $\M$.

In Section \ref{CScons} we briefly recall a  rather explicit construction of a compactification of $\M$ when $\N$ is
non-hyperelliptic and symmetric. This construction was given by  Stoehr \cite{St93}, then improved by Contiero--Stoehr \cite{c2013} and generalized
by Contiero--Fontes \cite{CF}.
All these constructions  can be viewed as a variant of Hauser's algorithm \cite{Hau83, Hau85} to compute versal deformation spaces, see also \cite{Stev13}.
Next, in Section \ref{Fam}, 
we use this construction to give explicit equations for  $\M$ 
when $\N$ runs over the following families of symmetric semigroups, 
\[
\langle6,3+6\tau,4+6\tau,7+6\tau,8+6\tau\rangle  \text{ and } \langle6,1+6\tau,2+6\tau,3+6\tau,14+6\tau\rangle
\]
and we show that  $\M$ is not empty in these cases by showing
that the associated affine monomial curves $\C_{\N}$ can be negatively smoothable, c.f. Theorems \ref{negsmoot1} and \ref{thm2}. Using a result of 
Contiero and Stoehr \cite{c2013} we find an upper bound for the dimension of $\M$,
which coincides with the lower bound, hence we are able to conclude that $\M$ is of pure dimension in these cases, Corollaries \ref{cor13} and \ref{boundfam2}.

\section{A new lower bound}\label{newbound}
The connection between the moduli space $\M$ and the negative part
of the deformation space of the monomial curve $\C_{\N}$ was first observed
by Pinkham \cite[Ch. 13]{Pi74}. Here $\N=\langle n_1,\dots,n_r\rangle$ 
is a numerical semigroup of 
genus $g> 1$ with semigroup ring   $\k[\N]:=\oplus_{n\in\N}\k\,t^{n}$
and  $\C_{\N}:=\Spec\k[\N]$ is its associated affine monomial curve. 
In the sequel we assume that $k$ is an algebraically closed field of
chacteristic zero.

Since $\C_{\N}$ has a unique singular point, there exists a versal deformation, 
c.f. \cite{Ar76}, say
%
\begin{equation}\label{diagrama}
\begin{matrix} 
\mathcal{X}_{t_0}\cong\C_{\N} & \longrightarrow & \mathcal{X} \\[3pt] 
\Big\downarrow & & \Big\downarrow\\[7pt]
\{t_0\}=\Spec\,\k & \longrightarrow & \mathcal{T}
\end{matrix}
\end{equation}
with $\mathcal{T}=\Spec A$, where $A$ is a local, complete noetherian $\k$-algebra.
Pinkham \cite{Pi74} showed that this deformation can be taken to be equivariant:
the  natural $\mathbb{G}_{m}$-action on $\C_{\N}$,
given by  $(\alpha, X_i)\mapsto \alpha^{n_i}X_i$,  
can be extended to the total  and
parameter spaces, $\mathcal{X}$ and  $\mathcal{T}$. This induces a natural
grading on the  tangent  space $T^1_{\C_{\N}}\cong T^{1}(\k[\N])$ to
$\mathcal{T}$. A deformation has negative weight $-e$ if it decreases the
weights of the equations of the curve and the corresponding deformation variable
has then (positive) weight $e$.
A numerical semigroup $\N$ is called \textit{negatively
graded} if  $T^{1}(\k[\N])$ has no positive graded part.

Let $\mathbf{I}$ be the ideal of $A$ generated by the images of the deformation variables  of negative weight, i.e. those elements
of $A$ that correspond to the elements in the positive graded part $T^{1,+}(\k[\N])$ of $T^{1}(\k[\N])$. 
Then $\mathcal{T}^-:=\Spec {A}/\mathbf{I}$  is the subspace of $\mathcal{T}$ in 
negative degrees and the restriction $\mathcal{X}^- \to \mathcal{T}^-$ is the 
versal deformation in negative degrees.
Both $\mathcal{X}^- $ and $ \mathcal{T}^-$ are defined by polynomials and we 
use the same symbols for the corresponding affine varieties.

The deformation $\mathcal{X}^- \to \mathcal{T}^-$ can be fiberwise compactified
to $\smash{\overline{\mathcal{X}}}^- \to \mathcal{T}^-$; each fibre is an integral curve
in a weighted projective space with one point $P$ at infinity and this is a point 
with semi-group $\N$.
All the fibres over a given
$\mathbb{G}_m$ orbit of $\mathcal{T}^-$ are isomorphic, and two fibres
are isomorphic if and only if they lie in the same orbit.
This is proved in \cite{Pi74} for smooth fibres and in general in the Appendix
of \cite{Lo84}.

Let $\C$ be a possible singular integral complete curve of arithmetic genus $g>1$ 
defined over 
$\k$. 
Given a smooth point $P$ of $\C$, let  $\N$ be the Weierstrass semigroup of $\C$ at $P$, that is the set 
of nonnegative integers  $n\in\N$ such that there is a rational function $x_n$ on $\C$ whose pole divisor $nP$. Consider the line  bundle $L=\LO(P)$ and
form the section ring $\mathcal{R} = \oplus_{i=0}^\infty H^0(\C,L^i)$.
This leads to an embedding of  $\C= \Proj \mathcal R$ in a weighted projective space, with coordinates $X_0,  \dots, X_r$ with $\deg X_0 = 1$.  The space $\Spec  \mathcal R$
is the corresponding quasi-cone in affine space. Setting $X_0 = 0$ defines the monomial 
curve $\C_\N$, all other fibres are isomorphic to $\C\setminus P$. In particular, if $\C$ is 
smooth, this construction defines a smoothing of $\C_\N$. 

\begin{thm}[\null{\cite[Thm. 13.9]{Pi74}}]\label{pinkhamthm}
Let $\mathcal{X}^- \to \mathcal{T}^-$ be the equivariant 
miniversal deformation in negative
degrees of the monomial curve $\C_\N$ for a given semigroup $\N$ and denote
by $(\mathcal{T}^-)_s$ the open subset of $\mathcal{T}^-$ given by the points
with smooth fibers. Then the moduli space $\M$ is isomorphic to the quotient
$\M=(\mathcal{T}^-)_s/\mathbb{G}_{m}$ of  $(\mathcal{T}^-)_s$ 
by the  $\mathbb{G}_{m}$-action.
\end{thm}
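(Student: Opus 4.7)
The plan is to construct a natural morphism $\Phi\colon (\mathcal{T}^-)_s \to \M$, show it is $\mathbb{G}_m$-invariant, and then prove the induced map on the quotient is both injective and surjective. First I would define $\Phi$: given a point $t\in (\mathcal{T}^-)_s$, take the corresponding smooth fiber $\mathcal{X}^-_t$, and use the fiberwise compactification $\smash{\overline{\mathcal{X}}}^-_t$ described just before the theorem. By construction $\smash{\overline{\mathcal{X}}}^-_t$ is a smooth integral projective curve with a distinguished smooth point $P$ at infinity, and the affine coordinates $X_1,\dots,X_r$ of weights $n_1,\dots,n_r$ restrict to rational functions with pole divisors $n_iP$. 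Hence the Weierstrass semigroup at $P$ contains $\N$; since both semigroups have genus $g$ (the arithmetic genus is preserved in a flat family and equals the genus of $\N$), they coincide, giving a well-defined pointed smooth curve in $\M$. Functoriality in $t$ upgrades this to a morphism of varieties.

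Next I would check $\mathbb{G}_m$-equivariance of the pointed-curve isomorphism class. The $\mathbb{G}_m$-action on $\mathcal{T}^-$ lifts to $\mathcal{X}^-$ and extends to the compactification, so fibers over points in the same orbit are isomorphic as pointed curves. This factors $\Phi$ through $\bar\Phi\colon (\mathcal{T}^-)_s/\mathbb{G}_m \to \M$. The statement in the excerpt that two fibers are isomorphic if and only if they lie in the same orbit (a fact proved in Pinkham for the smooth case, with the general version in the appendix of \cite{Lo84}) gives injectivity of $\bar\Phi$ on points, and the $\mathbb{G}_m$-equivariance of the versal deformation ensures this descends properly at the scheme level.

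For surjectivity I would take any $(\C,P)\in\M$ and apply the section ring construction sketched before the theorem: set $\mathcal{R}=\bigoplus_{i\ge 0}H^0(\C,\LO(iP))$ and embed $\C=\Proj\mathcal{R}$ in weighted projective space. The affine quasi-cone $\Spec\mathcal{R}$ carries a $\mathbb{G}_m$-action, its fiber over $X_0=0$ is the monomial curve $\C_\N$, and every other fiber is isomorphic to $\C\setminus P$. Thus $\Spec\mathcal{R}\to\AF^1$ is a one-parameter $\mathbb{G}_m$-equivariant smoothing of $\C_\N$. By versality of $\mathcal{X}^-\to\mathcal{T}^-$ this smoothing is induced by a $\mathbb{G}_m$-equivariant morphism $\AF^1\to\mathcal{T}^-$ whose image meets $(\mathcal{T}^-)_s$, and the preimage of $(\C,P)$ under $\Phi$ is precisely the $\mathbb{G}_m$-orbit of the image of $1\in\AF^1$. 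Equivariance pins down that the deformation parameters have positive weight, matching the hypothesis that we are in $\mathcal{T}^-$.

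The main obstacle I would expect is promoting the bijection $\bar\Phi$ to an isomorphism of moduli spaces rather than just a bijection on $\k$-points. This requires comparing the tangent spaces and the local deformation functors on both sides: on the $\mathcal{T}^-$ side one has the equivariant deformation functor of $\C_\N$ with a fixed smoothing direction, while on the $\M$ side one has the functor of smooth pointed curves with prescribed Weierstrass semigroup. The identification of these two functors at a smooth fiber, via the quasi-cone construction, is exactly Pinkham's key technical input; once this isomorphism of functors is in place, the scheme-theoretic statement follows from the fact that both $(\mathcal{T}^-)_s/\mathbb{G}_m$ and $\M$ are fine (or coarse) moduli spaces for the same functor, so no further argument beyond routine descent for the free $\mathbb{G}_m$-action on $(\mathcal{T}^-)_s$ is needed.
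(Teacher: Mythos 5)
You should note that the paper does not actually prove this theorem: it is quoted from Pinkham \cite[Thm.\ 13.9]{Pi74}, and the paragraphs surrounding the statement supply exactly the ingredients you assemble --- the fiberwise compactification $\smash{\overline{\mathcal{X}}}^-\to\mathcal{T}^-$ with marked point of semigroup $\N$ at infinity, the fact (from \cite{Pi74} and the appendix of \cite{Lo84}) that fibres are isomorphic precisely when they lie in the same $\mathbb{G}_m$-orbit, and the section-ring/quasi-cone construction giving surjectivity --- so your reconstruction follows essentially the same route as the cited source. The one inaccuracy is the closing appeal to a ``free'' $\mathbb{G}_m$-action: a point of $(\mathcal{T}^-)_s$ whose nonzero coordinates all have weights sharing a common divisor (equivalently, a fibre with extra automorphisms coming from the grading) has a nontrivial finite stabilizer, so the quotient should be taken as the geometric quotient (an open subset of the weighted $\Proj$ of $\mathcal{T}^-$) and the identification with $\M$ made at the level of coarse moduli, not via free descent.
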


The assumption on the characteristic is essential for this result. For examples
in finite characteristic see \cite{Na16}.
The moduli space $\M$ is non-empty if and only if the monomial curve
$\C_\N$ can be smoothed negatively. 
Deligne \cite[Thm 2.27]{Del73} established a formula for the dimension 
of smoothing components of curve singularities in general (and in arbitrary characteristic). 
A \emph{smoothing component} 
is an irreducible component of the versal 
deformation space  whose fiber over its generic point is smooth. 
%
Deligne's formula simplifies for quasihomogeneous curves (in characteristic zero)  \cite{Gr82}.
\begin{thm}[Deligne--Greuel formula]\label{greuelformula}
For any smoothing component $\mathrm{E}$ of a quasihomogeneous curve $\Spec\LO$ 
\[
\dim\mathrm{E}=\mu+t-1\;.
\]
\end{thm}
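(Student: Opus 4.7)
The plan is to obtain the stated identity as the quasi-homogeneous specialization of Deligne's general dimension formula for smoothing components of reduced curve singularities, exploiting the $\mathbb{G}_{m}$-action to invoke Greuel's duality for isolated Cohen--Macaulay curves.

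First I would recall Deligne's general formula: for any smoothing component $\mathrm{E}$ of a reduced curve singularity $\Spec \LO$ one has
\[
\dim \mathrm{E} \;=\; \delta \;+\; \dim_{\k}\!\bigl(\omega_{\LO}/\mathrm{d}\LO\bigr)\;+\;(r-1),
\]
where $\delta$ is the delta-invariant, $\omega_{\LO}$ the dualizing module, and $r$ the number of analytic branches. This is derived from an analysis of the Milnor fibration attached to a smoothing, identifying the parameter space of the smoothing (up to the obstruction space $T^{2}$) with the vanishing cohomology and its Euler characteristic.

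Second, for $\LO$ quasi-homogeneous I would invoke Greuel's duality theorem. The Euler derivation $\sum n_{i} X_{i} \partial_{X_{i}}$ splits every relevant module into weight spaces, and Greuel exhibits a non-degenerate pairing that identifies $\omega_{\LO}/\mathrm{d}\LO$ with a distinguished graded piece of the first cotangent cohomology. Combining this with the Buchweitz--Greuel relation $\mu = 2\delta - r + 1$ for reduced curve singularities allows one to rewrite Deligne's expression in the form $\mu + t - 1$, where $t$ is the Cohen--Macaulay type of $\LO$, i.e. the minimal number of generators of $\omega_{\LO}$. The correction $-1$ arises because the one-parameter subgroup of $\mathbb{G}_{m}$ acting by overall scaling on the deformation parameter contributes one dimension to $T^{1}(\LO)$ without producing a genuine smoothing modulus; equivalently, smoothings up to isomorphism correspond to $\mathbb{G}_{m}$-orbits in $\mathcal{T}^{-}$.

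The main obstacle is the compatibility of Greuel's duality with the splitting of $T^{1}$ into positive and negative graded parts, so that exactly the correct graded piece gets counted on a given smoothing component (a smoothing component exists only if the negatively graded part is nontrivial, while the pairing naturally mixes weights). This is also where the characteristic-zero hypothesis enters essentially: it ensures the Euler derivation acts semisimply, the weight decomposition is well-defined, and the duality pairing is non-degenerate on each graded component. Once the bookkeeping is set up correctly, the formula follows by substituting the invariants into Deligne's identity.
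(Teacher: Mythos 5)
There is a genuine error at the very first step, and it propagates through the whole argument. Note first that the paper does not prove this statement at all: it is quoted as a known result, with the general case attributed to Deligne \cite[Thm 2.27]{Del73} and the quasihomogeneous simplification to Greuel \cite{Gr82}, so any proof here must correctly reproduce those two inputs. Your recalled version of Deligne's formula, $\dim\mathrm{E}=\delta+\dim_{\k}(\omega_{\LO}/\mathrm{d}\LO)+(r-1)$, is not Deligne's formula. Since $\dim_{\k}(\omega_{\LO}/\mathrm{d}\LO)=\mu=2\delta-r+1$ by Buchweitz--Greuel (which you invoke yourself), your expression collapses identically to $3\delta$. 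This already fails for the node: there $\delta=1$, $r=2$, $t=1$, $\mu=1$, the semiuniversal deformation is $xy=s$, so the (unique) smoothing component has dimension $1=\mu+t-1$, whereas your formula gives $3$. Deligne's actual statement is $\dim\mathrm{E}=3\delta-\dim_{\k}\bigl(\tilde\Theta/\Theta\bigr)$, where $\Theta=\mathrm{Der}_{\k}(\LO)$ sits inside $\tilde\Theta=\mathrm{Der}_{\k}(\tilde\LO)$ for the normalization $\tilde\LO$; the correction term is $2$ for the node. The entire content of Greuel's theorem is the computation $\dim_{\k}(\tilde\Theta/\Theta)=3\delta-\mu-t+1$ for quasihomogeneous curves in characteristic zero, using the weight decomposition of $\tilde\Theta/\Theta$ under the Euler derivation together with duality. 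By omitting this term you have nothing left to compute, and the "duality'' step you sketch is aimed at the wrong module: what must be analyzed is $\tilde\Theta/\Theta$, not $\omega_{\LO}/\mathrm{d}\LO$ (the latter only enters through the identity $\mu=2\delta-r+1$, which is needed afterwards to rewrite the answer).

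A second, independent confusion is your explanation of the "$-1$''. The formula $\dim\mathrm{E}=\mu+t-1$ concerns the dimension of a component of the semiuniversal base itself; no quotient by $\mathbb{G}_{m}$ is taken at this stage, and no dimension is "discarded'' for a scaling orbit. (The $\mathbb{G}_{m}$-quotient appears only later, in Pinkham's Theorem \ref{pinkhamthm}, where it accounts for the drop from $\dim(\mathcal{T}^-)_s$ to $\dim\M$; conflating the two would double-count that $-1$ in the derivation of Theorem \ref{delupper}.) The $-1$ in $\mu+t-1$ is purely an artifact of the algebraic identity $3\delta-(3\delta-\mu-t+1)=\mu+t-1$; in the Gorenstein case $t=1$ it reduces to the classical statement $\dim\mathrm{E}=\mu$. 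To repair the proof you would need to (i) state Deligne's formula with the $\tilde\Theta/\Theta$ term, and (ii) carry out, or correctly cite, Greuel's graded computation of that term.
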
 
Here $\mu=2\delta-r+1$ is the Milnor number, where
$\delta:=\dim_{\k} \overline{\LO}/{\LO}$ the singularity degree of the curve at $P$,
$r$ is the number of branches and $t$ is the type
$\dim_k Ext^1_\LO(k,\LO)$. In particular, for Gorenstein curves
 $t=1$.
In the special case of monomial curves, where $\delta=g$ and $r=1$, 
the formula for the dimension of smoothing components already occurs in
\cite{Bu80}.
 In \cite[4.1.2]{Bu80}  a combinatorial  formula is stated for $t$, which shows that
 $t=\lambda(\N)$, the number of gaps $\l$ of $\N$ such that $\l+n\in\N$ whenever $n$ is a nongap. More formally,
let $\End(\N)=\{n\in \Nat\mid n+\N\setminus 0 \subset \N\}$. Then 
 $\lambda(\N)=\#(\End(\N) - \N) $. 
 
As each smoothing  component of $\mathcal{T}^-$ is contained in a smoothing 
component of the total versal deformation the Deligne--Greuel formula gives an upper bound
for the dimension of $\M=(\mathcal{T}^-)_s/\mathbb{G}_{m}$, first stated by
Rim and Vitulli \cite[\S 6]{RV77}.

\begin{thm}[Rim--Vitulli upper bound]\label{delupper} For any numerical semigroup $\N$
\[
\dim \M\leq 2g-2+\lambda(\N)\;.
\]
\end{thm}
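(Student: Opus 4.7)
The plan is to combine Pinkham's identification $\M \cong (\mathcal{T}^-)_s/\mathbb{G}_{m}$ from Theorem \ref{pinkhamthm} with the Deligne--Greuel dimension formula (Theorem \ref{greuelformula}) applied to the smoothing components of the full versal deformation $\mathcal{T}$ of $\C_\N$, transferring the bound to $\M$ via the one-dimensional $\mathbb{G}_{m}$-quotient.

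First I would show that every irreducible component of $(\mathcal{T}^-)_s$ sits inside a smoothing component of $\mathcal{T}$. Smoothness of fibres is an open condition on $\mathcal{T}$: if $p\in(\mathcal{T}^-)_s$ and $Z$ is any irreducible component of $\mathcal{T}$ through $p$, then $p$ has an open neighbourhood in $Z$ consisting of smooth fibres, so the generic point of $Z$ has smooth fibre and $Z$ is a smoothing component in the sense defined just before Theorem \ref{greuelformula}. Consequently
\[
\dim(\mathcal{T}^-)_s\;\leq\;\max_{\mathrm{E}}\dim\mathrm{E},
\]
where $\mathrm{E}$ ranges over smoothing components of $\mathcal{T}$.

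Next I would evaluate Theorem \ref{greuelformula} on the quasi-homogeneous curve $\C_\N$. The singularity degree is $\delta=g$, and $\C_\N$ is unibranch, so $r=1$ and the Milnor number is $\mu = 2g-1+1=2g$. The combinatorial description of Buchweitz cited in the excerpt identifies the type as $t=\lambda(\N)$. Therefore each smoothing component of $\mathcal{T}$ has dimension $2g+\lambda(\N)-1$, giving
\[
\dim(\mathcal{T}^-)_s\;\leq\;2g+\lambda(\N)-1.
\]

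Finally, the deformation variables generating $\mathcal{T}^-$ all carry strictly positive $\mathbb{G}_{m}$-weight, so the only fixed point of the action on $\mathcal{T}^-$ is the origin, which is excluded from the smooth locus. On $(\mathcal{T}^-)_s$ the action therefore has finite stabilizers and one-dimensional orbits, so passing to the quotient drops the dimension by exactly $1$. Combining with Pinkham's theorem,
\[
\dim \M \;=\; \dim(\mathcal{T}^-)_s - 1 \;\leq\; 2g-2+\lambda(\N).
\]
The main obstacle—worth verifying carefully but essentially routine—is the opening containment step relating $(\mathcal{T}^-)_s$ to smoothing components of $\mathcal{T}$; the computation of $\mu$ and the identification $t=\lambda(\N)$ are cited inputs that plug in directly.
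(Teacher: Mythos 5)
Your proposal is correct and follows essentially the same route as the paper: the authors likewise observe that each smoothing component of $\mathcal{T}^-$ lies in a smoothing component of the full versal deformation, apply the Deligne--Greuel formula with $\mu=2g$ and $t=\lambda(\N)$ to get the bound $2g-1+\lambda(\N)$ upstairs, and subtract one for the $\mathbb{G}_m$-quotient in Pinkham's theorem. Your write-up merely makes explicit the openness-of-smoothness step that the paper leaves implicit.
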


This upper bound is attained: 
Rim and Vitulli showed, that if $\N$ is 
negatively graded, then $\C_\N$ is negatively smoothable 
\cite[Cor. 5.1]{RV77}, so $\dim \M=2g-2+\lambda(\N)$. 
A complete list of all negatively graded semigroups can also be found in \cite[Thm. 4.7]{RV77}.
In general the Rim--Vitulli bound is far from
being tight, for examples in low genus see Table \ref{tab1}. It also happens for
the families of semigroups in Section \ref{Fam}.

Let us assume  that $\C_{\N}$ can be smoothed negatively. 
Let $E^-$ be a smoothing component of $\mathcal{T}^-$, then there is smoothing
component $E$ of the versal deformation space of $\C_{\N}$ such that $E^-$ is
a component of $E\cap \mathcal{T}^-$. As $E\cap \mathcal{T}^-$ is obtained by
adding $\dim\mathrm{T}^{1,+}(\k[\N])$ linear equations to its defining equations, we have that the dimension of $Y$ is not smaller than
  $\dim E-\dim\mathrm{T}^{1,+}(\k[\N])$.  
With the Deligne--Greuel formula we obtain

\begin{thm}\label{Stevens}  Let $\N$ be a numerical semigroup $\N$ of genus bigger than $1$. 
If $\M$ is nonempty, then for any irreducible component $\mathrm{X}$ of $\M$  
\[
2g-2+\lambda(\N)-\dim\mathrm{T}^{1,+}\leq \dim\mathrm{X}\;.
\]
\end{thm}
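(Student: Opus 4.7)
The plan is to follow the outline in the paragraph immediately preceding the statement, making each step precise. The steps are: (i) use Pinkham's theorem to identify an irreducible component $\mathrm{X}$ of $\M$ with a $\mathbb{G}_m$-quotient of a smoothing component $E^-$ of $\mathcal{T}^-$; (ii) enlarge $E^-$ to a smoothing component $E$ of the full miniversal base $\mathcal{T}$; (iii) estimate the dimension drop from $E$ to $E\cap\mathcal{T}^-$ by a linear equations argument; and (iv) compute $\dim E$ via the Deligne--Greuel formula.

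For step (i), since $\M$ is nonempty, Theorem \ref{pinkhamthm} gives $\M\cong (\mathcal{T}^-)_s/\mathbb{G}_m$, so any irreducible component $\mathrm{X}$ is the image of some irreducible component $E^-$ of $(\mathcal{T}^-)_s$, whose closure in $\mathcal{T}^-$ is a smoothing component. Because smooth curves of genus $g>1$ have finite automorphism groups, the $\mathbb{G}_m$-action on $E^-$ is generically free, so $\dim \mathrm{X}=\dim E^- -1$. For step (ii), the generic smooth point of $E^-$ lies on some irreducible component $E$ of the full miniversal deformation $\mathcal{T}$; by construction $E$ has a smooth generic fibre and hence is a smoothing component, and $E^-$ is then an irreducible component of the scheme-theoretic intersection $E\cap \mathcal{T}^-$.

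The crucial input for step (iii) is that $\mathcal{T}^-$ is cut out of $\mathcal{T}$ by $\dim T^{1,+}(\k[\N])$ \emph{linear} equations. In Pinkham's equivariant construction a homogeneous basis of $T^1(\k[\N])$ furnishes the deformation parameters of the embedded miniversal base, and the parameters dual to $T^{1,+}$ are precisely those generating $\mathbf{I}$ and set to zero in $\mathcal{T}^-$. Intersecting the irreducible $E$ with this linear subspace of codimension $\dim T^{1,+}(\k[\N])$ produces a scheme whose every nonempty component has dimension at least $\dim E-\dim T^{1,+}(\k[\N])$, by iterated application of Krull's principal ideal theorem. Applied to the component $E^-$ this gives
\[
\dim E^-\geq \dim E-\dim T^{1,+}(\k[\N]).
\]

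For step (iv), the Deligne--Greuel formula (Theorem \ref{greuelformula}) applied to the Gorenstein monomial curve $\C_\N$ yields $\dim E=\mu+t-1$ with $\mu=2g$ (since $\delta=g$ and $r=1$) and $t=\lambda(\N)$ by Buchweitz' combinatorial identification, so $\dim E=2g+\lambda(\N)-1$. Substituting and subtracting one for the $\mathbb{G}_m$-quotient delivers the stated bound. The main obstacle is step (iii): verifying that the defining equations of $\mathcal{T}^-$ inside the embedded miniversal base are honestly linear, so that Krull applies cleanly; this is where the $\mathbb{G}_m$-equivariance of Pinkham's construction is indispensable, as without it one would only control the embedding codimension rather than the drop in intrinsic dimension.
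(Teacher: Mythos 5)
Your proposal is correct and follows essentially the same route as the paper: embedding a smoothing component $E^-$ of $\mathcal{T}^-$ in a smoothing component $E$ of the full miniversal base, bounding the codimension drop by the $\dim T^{1,+}(\k[\N])$ linear equations cutting out $\mathcal{T}^-$, applying the Deligne--Greuel formula with $\mu=2g$ and $t=\lambda(\N)$, and subtracting one for the $\mathbb{G}_m$-quotient via Pinkham's theorem. The only difference is that you spell out details the paper leaves implicit (the Krull height argument and the generic freeness of the $\mathbb{G}_m$-action), which is a welcome clarification rather than a change of method.
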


To describe a dimension formula for the graded parts of $T^{1}(\k[\N])$ we start from a result due to Herzog that assures that the ideal of 
$C_{\N}:=\{(t^{a_1},\dots,t^{a_r})\,;\,t\in\k\}\subset\mathbb{A}^{r}$
can be generated by isobaric polynomials $F_i$ that are differences of two monomials, namely
$$F_i:=X_{1}^{\alpha_{i1}}\dots X_{r}^{\alpha_{ir}}-X_{1}^{\beta_{i1}}\dots X_{r}^{\beta_{ir}},$$
with $\alpha_i\cdot\beta_i=0$. As usual, the weight of $F_i$ is $d_i:=\sum_j n_j\alpha_{ij}=\sum_j n_j\beta_{ij}$.
For each $i$, let $v_i:=(\alpha_{i1}-\beta_{i1},\dots,\alpha_{ir}-\beta_{ir})$
be the vector in $\k^{r}$ induced by $F_i$.

\begin{thm}[cf. Thm. 2.2.1 of \cite{Bu80}]\label{t1busc}
Let $\k[\N]$ be the semigroup ring of a numerical semigroup $\N$.
For $\l\in\mathbb{Z}$ let $A_{\l}:=\{i\in\{1,\dots,r\}\mid n_i+\l\notin\N\}$
and let  $V_{\l}$ be the vector subspace of $\k^{r}$ generated by the vectors $v_i$
such that $d_i+\l\notin\N$. Then for $\l\notin\End(\N)$
\[
\dim T^{1}(\k[\N])_{\l}=\# A_\l-\dim V_{\l}-1\;
\]
while $\dim T^{1}(\k[\N])_s=0$ for  $s\in\End(\N)$ .
 \end{thm}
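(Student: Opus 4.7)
The plan is to analyse, weight by weight under the $\G_m$-grading, the standard four-term exact sequence for $T^{1}$ of the presented algebra $A = R/I$,
\[
0 \longrightarrow T^{0}(A)_\ell \longrightarrow \operatorname{Der}_{\k}(R,A)_\ell \xrightarrow{d} \operatorname{Hom}_{A}(I/I^{2},A)_\ell \longrightarrow T^{1}(A)_\ell \longrightarrow 0,
\]
where $R = \k[X_1, \dots, X_r]$ with $\deg X_j = n_j$, $I$ is generated by Herzog's binomials $F_i = X^{\alpha_i} - X^{\beta_i}$ of weight $d_i$, and $A = \k[\N]$. A homogeneous derivation $\partial = \sum_j f_j\,\partial/\partial X_j$ of weight $\ell$ requires $f_j \in A_{n_j + \ell}$, so it corresponds to a tuple $c \in \k^r$ with $c_j = 0$ whenever $n_j + \ell \notin \N$; hence $\dim \operatorname{Der}_{\k}(R,A)_\ell = r - \# A_\ell$.

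I would then make the map $d$ explicit via the calculation
\[
d(\partial)(F_i) \equiv \Bigl( \sum_j c_j (\alpha_{ij} - \beta_{ij}) \Bigr) t^{d_i + \ell} = (c \cdot v_i)\, t^{d_i + \ell} \pmod I,
\]
which is automatically zero in $A$ whenever $d_i + \ell \notin \N$, because for every $j$ with $v_{ij} \neq 0$ one has $d_i - n_j \in \N$, so additive closure of $\N$ forces $j \in A_\ell$. Consequently the kernel of $d$ consists of the tuples $c$ supported outside $A_\ell$ that are orthogonal to every $v_i$; since the $v_i$ span the hyperplane $\{v \in \k^r : v \cdot n = 0\}$, this kernel is the intersection of $\k \cdot n$ with $\{c_j = 0,\ j \in A_\ell\}$, which vanishes exactly when $A_\ell \neq \emptyset$, that is when $\ell \notin \End(\N)$. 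This establishes $T^{0}(A)_\ell = 0$ in that range.

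The central step is the computation of $\operatorname{Hom}_{A}(I/I^{2}, A)_\ell$. An element is a tuple $(\phi_i)$ with $\phi_i \in \k$ whenever $d_i + \ell \in \N$ and $\phi_i = 0$ otherwise, subject to the linear constraints imposed by the first syzygies of the binomial generators. These toric syzygies are governed by the integer lattice relations among the $v_i$, and by tracking which relations survive in weight $\ell$ one shows
\[
\dim \operatorname{Hom}_{A}(I/I^{2}, A)_\ell = r - 1 - \dim V_\ell,
\]
the $-1$ reflecting that the $v_i$ are all orthogonal to $(n_1, \dots, n_r)$. Substituting into the exact sequence then yields
\[
\dim T^{1}(\k[\N])_\ell = (r - 1 - \dim V_\ell) - (r - \# A_\ell) = \# A_\ell - \dim V_\ell - 1 \qquad (\ell \notin \End(\N)).
\]
For $\ell \in \End(\N)$, equivalently $\# A_\ell = 0$, any weight-$\ell$ perturbation of the defining equations $F_i$ can be absorbed by the coordinate change $X_j \mapsto X_j + c_j t^{n_j + \ell}$, and $T^{1}(\k[\N])_\ell = 0$ follows.

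The principal obstacle is the Hom-module dimension formula $\dim \operatorname{Hom}_{A}(I/I^{2}, A)_\ell = r - 1 - \dim V_\ell$: translating the combinatorics of the first syzygies of a binomial ideal into this clean statement is the substantive content of Buchweitz's argument and is precisely where the subspace $V_\ell$ enters the final count.
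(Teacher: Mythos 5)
First, a point of reference: the paper does not prove this statement at all --- it is quoted from Buchweitz (Thm.\ 2.2.1 of \cite{Bu80}) and used as a black box --- so your attempt can only be judged on its own merits. Your skeleton is the standard and correct one: take the graded pieces of the four-term exact sequence
$0 \to \operatorname{Der}_\k(A,A)_\ell \to \operatorname{Der}_{\k}(R,A)_\ell \to \operatorname{Hom}_A(I/I^2,A)_\ell \to T^1(A)_\ell \to 0$, and the two computations you actually carry out ($\dim\operatorname{Der}_\k(R,A)_\ell = r-\#A_\ell$ and $d(\partial)(F_i) = (c\cdot v_i)\,t^{d_i+\ell}$) are right. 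But the identity $\dim\operatorname{Hom}_A(I/I^2,A)_\ell = r-1-\dim V_\ell$ is the entire content of the theorem --- it is exactly where $V_\ell$ enters --- and you do not prove it: ``by tracking which relations survive in weight $\ell$ one shows\dots'' is a placeholder, not an argument, and your closing paragraph concedes as much. One must actually exhibit enough syzygies of the binomial generators (e.g.\ the Koszul-type relations $X^{\gamma}F_i - X^{\delta}F_j$ rewritten in the generators, or Herzog's/Buchweitz's explicit first syzygies) and show that the constraints they impose on a tuple $(\phi_i)$ cut the space down to exactly $r-1-\dim V_\ell$ dimensions. Without that, nothing is established.

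There is also a genuine logical slip in the $T^0$ step. A tuple $c$ lies in $\ker d$ iff $c_j=0$ for $j\in A_\ell$ and $c\cdot v_i=0$ \emph{only for those $i$ with $d_i+\ell\in\N$}; for the remaining $i$ the equation $(c\cdot v_i)t^{d_i+\ell}=0$ holds automatically in $A$. You instead impose orthogonality to \emph{every} $v_i$, which describes a (possibly proper) subspace of the kernel; showing that subspace is zero does not show $T^0(A)_\ell=0$. The conclusion is nevertheless true, but the clean way to see it is to bypass $d$ entirely: $\ker d = \operatorname{Der}_\k(A,A)_\ell$, any derivation of $\k[\N]$ extends to the normalization $\k[t]$, a homogeneous one of weight $\ell$ is $\lambda t^{\ell+1}\tfrac{d}{dt}$, and this preserves $\k[\N]$ precisely when $n+\ell\in\N$ for all $n\in\N\setminus 0$, i.e.\ when $\ell\in\End(\N)$. (This also gives $\dim T^0_s=1$ for $s\in\End(\N)$, which is what you actually need there; your alternative claim that every weight-$s$ perturbation ``can be absorbed by a coordinate change'' is again just the assertion that $d$ is surjective in degree $s$, which rests on the same unproven $\operatorname{Hom}$ count.) In short: right framework, correct peripheral computations, but the central dimension formula for the normal module is missing and the kernel computation as written is invalid.
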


Pflueger produced  in \cite[Thm. 1.2]{N16} an upper bound for the codimension
of $\M$  as  locally closed subset of $\mathcal{M}_{g,1}$, improving  a
 bound  by Eisenbud and Harris in \cite{EH87}.
He introduced 
the \textit{effective weight} \cite[Def. 1.1]{N16} of a numerical semigroup $\N$ 
with a minimal system of generators,  $\N=\langle n_1,\dots,n_r\rangle$
\[
\ewt(\N):=\displaystyle\sum_{\mbox{gaps}\ \l_i}^{}(\#\ \mbox{generators}\ n_j<\l_i)\;.
\]
With this substitute for the classical weight $\mathrm{wt}(\N):=\sum\l_i-i$, which is equal to the sum over the gaps of $\#\ \mbox{nongaps}\ n<\l_i$,
appearing  in the Eisenbud--Harris bound Pflueger established the following result.

\begin{thm}[Pflueger's bound]\label{NPbound}
If the moduli space $\M$ is nonempty, and $X$ is any irreducible component of it, then
\[
\dim X\geq 3g-2-\ewt(\N)\;.
\]
\end{thm}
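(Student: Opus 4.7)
The plan is to derive Pflueger's bound as a corollary of Theorem \ref{Stevens} combined with the combinatorial inequality
\[
\dim\mathrm{T}^{1,+}(\k[\N]) \leq \ewt(\N) + \lambda(\N) - g,
\]
which is essentially the content of Proposition \ref{lbound}. Granting this inequality, substituting into Theorem \ref{Stevens} gives
\[
\dim X \geq 2g-2+\lambda(\N)-\dim\mathrm{T}^{1,+}(\k[\N]) \geq 3g-2-\ewt(\N),
\]
as required. So the whole proof reduces to establishing the displayed inequality.

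To do this, I would invoke Buchweitz's description (Theorem \ref{t1busc}). First observe that any positive integer $\l \notin \End(\N)$ must be a gap: if $\l \in \N$ were a nongap, then $\l + (\N\setminus\{0\}) \subseteq \N$, placing $\l$ in $\End(\N)$. Let $\mathcal{G}^{+}$ denote the set of gaps $\l$ with $\l \notin \End(\N)$; since $\lambda(\N) = \#(\End(\N)\setminus\N)$ counts exactly the gaps that do lie in $\End(\N)$, one has $\#\mathcal{G}^{+} = g - \lambda(\N)$. Theorem \ref{t1busc} together with $\dim V_\l \geq 0$ then yields
\[
\dim\mathrm{T}^{1,+}(\k[\N]) = \sum_{\l\in\mathcal{G}^{+}} \bigl(\#A_\l - \dim V_\l - 1\bigr) \leq \sum_{\l\in\mathcal{G}^{+}} \#A_\l \;-\; (g-\lambda(\N)),
\]
so it suffices to prove $\sum_{\l\in\mathcal{G}^{+}} \#A_\l \leq \ewt(\N)$; in fact equality will hold.

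The core combinatorial step is the explicit bijection
\[
\Phi \colon \bigl\{(\l,j) : \l\in\mathcal{G}^{+},\; n_j+\l \notin \N\bigr\} \longrightarrow \bigl\{(\l',j) : \l' \text{ a gap},\; n_j < \l'\bigr\}, \quad (\l,j) \mapsto (n_j+\l, j).
\]
The source has cardinality $\sum_{\l\in\mathcal{G}^{+}} \#A_\l$ and the target, by swapping the order of summation in the definition of $\ewt$, has cardinality $\ewt(\N)$. Injectivity is immediate since $\l = \l' - n_j$. For surjectivity, given $(\l', j)$ in the target, set $\l := \l' - n_j > 0$; then $\l$ is a gap (otherwise $\l' = \l + n_j \in \N$), and the nongap $n_j \in \N\setminus\{0\}$ witnesses $\l + n_j = \l' \notin \N$, so $\l \notin \End(\N)$. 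Hence $(\l,j)$ lies in the source and maps to $(\l',j)$.

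The main obstacle is the correct identification of $\mathcal{G}^{+}$ and the surjectivity check for $\Phi$; the rest is bookkeeping. It is worth noting that the slack between our bound and Pflueger's is exactly $\sum_{\l\in\mathcal{G}^{+}} \dim V_\l \geq 0$, so the approach via Theorem \ref{Stevens} recovers Pflueger's bound and is strictly sharper precisely when some $V_\l$ is non-trivial, in agreement with the examples announced for $\langle 6,7,8\rangle$ and $\langle 6,7,15\rangle$ in the introduction.
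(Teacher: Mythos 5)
Your derivation is logically sound, but it takes a genuinely different route from the paper: the paper does not prove Theorem \ref{NPbound} at all, it quotes it from Pflueger \cite[Thm. 1.2]{N16}, whose argument goes through degenerations of stable curves and limit linear series. What you do instead is run the paper's own machinery backwards: Theorem \ref{Stevens} (Pinkham's construction plus the Deligne--Greuel formula) gives $\dim X\geq 2g-2+\lambda(\N)-\dim T^{1,+}(\k[\N])$, and your combinatorial inequality $\dim T^{1,+}(\k[\N])\leq \ewt(\N)+\lambda(\N)-g$ is exactly Proposition \ref{lbound} rearranged; your bijection $(\l,j)\mapsto(n_j+\l,j)$ is the inverse of the map $(n,\l)\mapsto(n,\l-n)$ in the paper's unnumbered lemma, and your identification of $\Nat\setminus\End(\N)$ with the set of gaps outside $\End(\N)$, of cardinality $g-\lambda(\N)$, matches the paper's count. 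Since neither Theorem \ref{Stevens} nor Proposition \ref{lbound} uses Pflueger's theorem, there is no circularity, and you obtain a purely deformation-theoretic and commutative-algebraic proof of Pflueger's bound, with the added information that the slack is exactly $\sum_{\l\notin\End(\N)}\dim V_\l$. The trade-off is scope: your proof inherits the standing hypotheses of Section \ref{newbound} (characteristic zero, needed for Pinkham's theorem and the Greuel form of Deligne's formula, and $g>1$), whereas Pflueger's original argument is independent of this monomial-curve machinery; within the setting of the paper, however, your derivation is complete and correct.
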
 

We compare this bound with our bound from Theorem \ref{Stevens}.
Using the notation of  Theorem \ref{t1busc} we first give a different formula
for $\ewt(\N)$.
\begin{lem} Let $n_1<\cdots< n_r$ be a minimal system of generators for $\N$. We have
\[
\ewt(\N)=\sum_{\l \notin\End(\N)}\# A_{\l}.
\]
\end{lem}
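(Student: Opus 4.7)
The plan is to reduce the right-hand side to a sum over all gaps of $\N$ and then swap the order of summation, turning the identity into a simple bijection count.

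First, I would observe that the sum on the right is supported on the gaps. Any nongap $\l \in \N$ automatically lies in $\End(\N)$ (since $\N$ is closed under addition), and $0 \in \End(\N)$ as well, so nongaps contribute nothing; and if $\l$ is a gap that happens to lie in $\End(\N)$, then $n_i + \l \in \N$ for every generator $n_i$, so $A_\l = \emptyset$. Consequently
\[
\sum_{\l \notin \End(\N)} \# A_\l \;=\; \sum_{\l\ \text{gap of}\ \N} \#\{\,i : n_i + \l \notin \N\,\}.
\]
Swapping the order of summation,
\[
\sum_{\l\ \text{gap}} \#\{\,i : n_i + \l \notin \N\,\} \;=\; \sum_{i=1}^{r} \#\{\,\l\ \text{gap} : n_i + \l \notin \N\,\}.
\]
For a fixed generator $n_i$ and a gap $\l > 0$, the integer $g := n_i + \l$ is strictly positive, so $n_i + \l \notin \N$ is equivalent to $g$ being a gap.

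To finish, for each $i$ I would exhibit the bijection $\l \longleftrightarrow g = n_i + \l$ between the set of gaps $\l$ with $n_i + \l$ itself a gap and the set of gaps $g > n_i$. The inverse $g \mapsto g - n_i$ is well defined because if $g - n_i$ were a nongap, then $g = n_i + (g - n_i)$ would also be a nongap, contradicting that $g$ is a gap. Summing over $i$ recovers
\[
\sum_{i=1}^{r} \#\{\,\l\ \text{gap} : \l > n_i\,\} \;=\; \ewt(\N)
\]
by the very definition of the effective weight. The argument is essentially double counting, so I do not expect any genuine obstacle; the only point that really needs care is interpreting "$\l \notin \End(\N)$" as $\l$ running over $\Nat \setminus \End(\N)$, together with the observation that $A_\l$ vanishes for gaps lying inside $\End(\N)$, so that the stated sum is in fact indexed by all gaps of $\N$.
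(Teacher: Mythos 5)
Your proof is correct and follows essentially the same route as the paper's: both reduce the right-hand sum to a count over gaps (using that $\N\subseteq\End(\N)$ and that $A_\l=\emptyset$ for $\l\in\End(\N)$) and then identify the two pair-counts via the shift bijection $\l\mapsto n_i+\l$. The only difference is organizational — you fix the generator index and apply the bijection fiberwise, while the paper applies $(n,\l)\mapsto(n,\l-n)$ to the full set of pairs at once.
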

\begin{proof}
By  definition  
$$\ewt(\N) = \sum_{\l\notin\N} \#\{n_i<\l \}=\# \{(n_i,\l_j)\mid n_i<\l\}.$$
On the other hand, $$\sum_{\l \notin \End(\N)} \# A_{\l} = \sum_{\l \notin \N} \# A_{\l},$$ 
where $\#A_{\l}$ is  $\# \{n_i+l \notin S\}$,
so this is the number of pairs $(n_i,\l)$ such that $n_i+\l$ is a gap.
Since the map $(n,\l)\mapsto(n,\l-n)$ is a bijection
from the first set of pairs to the second, their cardinality is the same.
\end{proof}   

 \begin{prop}\label{lbound}
 For any numerical semigroup $\N$ of genus $g\geq 1$ the bound of Theorem \ref{Stevens}
is not less then Pflueger's lower bound:
 $$3g-2-\ewt(\N)\leq 2g-2+\lambda(\N)-\dim T^{1,+}(\N)\;.$$
 \end{prop}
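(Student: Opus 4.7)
The plan is to rearrange the inequality and then prove it by combining Buchweitz's formula (Theorem \ref{t1busc}) with the preceding lemma; the inequality will collapse to the obvious statement that $\sum \dim V_\l \geq 0$.

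First, I would rewrite the target as $\dim T^{1,+}(\N) \leq \ewt(\N) + \lambda(\N) - g$, and pin down the indexing set for $T^{1,+}$. The positive graded part of $T^1$ corresponds (per Section \ref{newbound}) to deformation variables of negative weight, i.e.\ to tangent vectors of positive weight $\l > 0$; and Theorem \ref{t1busc} forces $T^1(\k[\N])_\l = 0$ for $\l \in \End(\N)$. Hence the direct sum $T^{1,+} = \bigoplus_{\l > 0} T^1(\k[\N])_\l$ effectively indexes the set $(\mathrm{gaps}) \setminus \End(\N)$, a set of cardinality $g - \lambda(\N)$.

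Next, I would observe that $\# A_\l = 0$ for every $\l \in \End(\N)$: by definition each generator $n_i$ lies in $\N\setminus\{0\}$, so $\l + n_i \in \N$ for such $\l$. Summing Buchweitz's formula over positive $\l$ then gives
\[
\dim T^{1,+}(\N) \;=\; \sum_{\l \,\in\, \mathrm{gaps}\setminus\End(\N)} \bigl(\# A_\l - \dim V_\l - 1\bigr).
\]
The first term produces $\ewt(\N)$ via the preceding lemma (using $\# A_\l = 0$ on $\End(\N)$), while the $-1$ terms contribute $-(g-\lambda(\N))$ by counting the indexing set. Rearranging,
\[
\ewt(\N) + \lambda(\N) - g - \dim T^{1,+}(\N) \;=\; \sum_{\l \,\in\, \mathrm{gaps}\setminus\End(\N)} \dim V_\l \;\geq\; 0,
\]
which is exactly the desired inequality.

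There is no substantive obstacle: the argument is a bookkeeping identity. The only mildly delicate point is aligning conventions, namely recognizing that the positive graded part of $T^1$ corresponds to positive $\l$ in Buchweitz's grading, and exploiting the simultaneous vanishing of $\# A_\l$ and $T^1(\k[\N])_\l$ on $\End(\N)$, which makes the sum over gaps and the sum over $\mathrm{gaps} \setminus \End(\N)$ coincide.
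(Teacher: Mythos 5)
Your proposal is correct and follows essentially the same route as the paper's proof: both sum Buchweitz's formula over $\l\notin\End(\N)$, identify $\sum_{\l\notin\End(\N)}\#A_\l$ with $\ewt(\N)$ via the preceding lemma, count the indexing set as $g-\lambda(\N)$, and reduce the inequality to $\sum_{\l\notin\End(\N)}\dim V_\l\geq 0$. Your explicit remarks that $\#A_\l=0$ for $\l\in\End(\N)$ and that $T^{1,+}$ is indexed by positive $\l$ only make precise what the paper leaves implicit, but the argument is the same.
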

 \begin{proof}
Using Theorem \ref{t1busc} and the above lemma we obtain
\begin{equation*}
\dim T^{1,+}(k[\N])=
 \sum_{\l\notin \End(\N)} (\#A_\l-\dim V_\l-1) = 
\ewt(\N) - \sum_{\l\notin \End(\N)} \dim V_\l - \# (\Nat\setminus \End(\N))
\end{equation*} 
and $ \# (\Nat\setminus \End(\N))= \# (\Nat\setminus\N) - \# (\End(\N)\setminus\N)= 
g-\lambda(\N)$.
So
 $$3g-2-\ewt(\N) +  \sum_{\l\notin \End(\N)} \dim V_\l  =  2g-2+\lambda(\N)-\dim T^{1,+}(\N)\;.$$
\end{proof}

An example  where Pflueger's bound does not provide the exact dimension
of $\M$ is  given by Pflueger himself, cf. \cite[2F]{N16}. This example,
the symmetric semigroup $\N:=\langle6,7,8\rangle$ of genus $9$, fits in a more general context.
Let $\N$ be a symmetric semigroup generated by less than $5$ elements. Then 
the affine monomial curve $\Spec\ \k[\N]$ is a complete intersection, or if
$\N=\langle a_1,a_2,a_3,a_4\rangle$  a quasi-homogeneous version of 
Buchsbaum-Eisenbud's structure theorem
for Gorenstein ideals of codimension $3$ (see
\cite[p.\,466]{BE77}) applies. In both cases one can deduce that
$\Spec\ \k[\N]$
can be negatively smoothed without any obstructions (
\cite{W79} and \cite[Satz 7.1]{W80}), hence
\begin{equation}\label{truco}
\overline{\M} = \proj(\mathrm{T}^{1,-}(\k[\N]),
\end{equation} and therefore,
$\dim \M = \dim \proj(\mathrm{T}^{1,-}(\k[\N])$.
For $\N=\langle6,7,8\rangle$ one computes $\dim V_3 = 1$ and $\ewt(\N)=12$,
so $\dim\M=14$, Pflueger's bound gives $13$, while the Rim--Vitulli
bound provides $2g-1=17$.  In the same way, for
the symmetric semigroup $\langle6,7,15\rangle$ of genus $12$, Pflueger's bound gives
$17$, while $\dim V_2 = 1$ and $\dim\M=18$.

For each numerical semigroup of genus not bigger than $6$ the dimension of $\M$  
is equal to Pflueger's bound (see \cite[2C]{N16}), hence it is also equal to that given by Theorem \ref{Stevens}. In Table \ref{tab1} we collect for all 
all numerical semigroups of genus $g\leq 6$, which are not negatively graded,
the name of the semigroup in the list of Nakano \cite{Na08}, the gaps, 
the dimension of $\M$, which is also equal to both lower bounds, the
value of the Rim--Vitulli bound and $\dim T^{1,+}(\k[\N])$.

\begin{table}[htb]
\caption{non-negatively graded semigroups of genus $\leq 6$}
\label{tab1}
\begin{center}
\begin{tabular}{lcccc}
\cite{Na08}& gaps& $\dim\mathcal{M}_{g,1}^{\N}$& R--V & $\dim T^{1,+}$\\ \hline \hline
$N(5)_3 $ & 1, 2, 4, 5, 8& 9 & 10 & 1\\ \hline
$N(5)_5 $ & 1, 2, 3, 5, 7& 10 & 11 & 1\\ \hline
$N(5)_7 $ & 1, 2, 3, 6, 7& 9 &  10 & 1\\ \hline
$N(6)_3 $ & 1, 2, 4, 5, 7, 10&  11 & 12 & 1\\ \hline
$N(6)_4 $ & 1, 2, 4, 5, 8, 11&  10 & 11 & 1\\ \hline
$N(6)_6 $ & 1, 2, 3, 5, 6, 9&  12 & 13 & 1\\ \hline
$N(6)_7 $ & 1, 2, 3, 5, 6, 10& 11 & 12 & 1\\ \hline
$N(6)_8 $ & 1, 2, 3, 5, 7, 9& 11 & 13 & 2\\ \hline
$N(6)_9 $ & 1, 2, 3, 5, 7, 11& 10 & 11 & 1\\ \hline
$N(6)_{10} $ & 1, 2, 3, 6, 7, 11& 10 & 11 & 1\\ \hline
$N(6)_{12} $ & 1, 2, 3, 4, 6, 8& 13 & 14 & 1\\ \hline
$N(6)_{13} $ &  1, 2, 3, 4, 6, 9& 12 & 13 & 1\\ \hline
$N(6)_{15}  $ & 1, 2, 3, 4, 7, 8& 12 & 13 & 1\\ \hline
$N(6)_{16}  $ & 1, 2, 3, 4, 7, 9& 11 & 12 & 1\\ \hline
$N(6)_{17} $ &  1, 2, 3, 4, 8, 9& 10 & 12 & 2\\
\end{tabular}
\end{center}
\end{table}

\begin{rmk}
Nakano \cite{Na08} computed $\M$ using Pinkham's theorem  by
determining the base space of the miniversal deformation in negative degrees
of the monomial curve $\C_\N$. In all cases he succeeded except one
$\M$  has the structure of a projective quasi-cone over $\proj^1\times\proj^3$.
In case $N(6)_8 $ with semigroup $\langle 4,6,11,13\rangle$ the base space
of the total versal deformation is, after a coordinate transformation,  given by
\[
\operatorname{Rk}
\begin{pmatrix}
a_9 & a_{11} & a_{16} & a_{18}\\
a_{-1}&a_1 &a_6 & a_8
\end{pmatrix}
\leq1\;,
\]
where the entries are deformation variables indexed by their weight. The remaining
variables have weights $14,12,10,8,6,4,4,2$ and $-3$.
So this base space is irreducible, but the intersection with $\mathcal{T}^-$ 
is given by $a_{-1}=0$ and consists two components, one smoothing component,
and the other not: the general fiber over the component 
$a_1=a_6= a_8=0$ is a curve with a double point.
\end{rmk}

\begin{rmk}
There are semigroups where our new lower bound given by
Theorem \ref{Stevens} is not attained. For example, if $\M$ is not of pure dimension,
then the dimension of the biggest component does not attain our new bound.
The first examples are given  by Pflueger \cite[Thm 1.1]{NP2}
in his  detailed study of the moduli variety 
$\M$ when $\N$ is a \textit{Castelnuovo semigroup},  a semigroup generated
by consecutive suitable positive integers: $S_{r,d}:=\langle d-r+1,\dots,d\rangle$,with $d\geq2r-1$. Castelnuovo semigroups  also provide many examples where 
Pflueger's bound (Theorem \ref{NPbound}) are not attained, see \cite[Prop 6.3]{NP2}.

In addition, there are symmetric semigroups where the lower bound
 $2g-2+\lambda(\N)-\dim T^{1,+}(\k[\N])=2g-1-\dim\mathrm{T}^{1,+}$ is negative. 
For example,  for the symmetric semigroups $\langle29,30,\dots,42,57\rangle$, 
$\langle31,32,\dots,45,61\rangle$ and $\langle33,34,\dots,48,64\rangle$, 
of genus $43$, $46$ and $49$,   the number 
$2g-1-\dim\mathrm{T}^{1,+}$ is
 $-6$, $-14$ and $-23$, respectively. 
\end{rmk}

\section{Weierstrass points on Gorenstein curves}\label{CScons}

Let $\C$ be a possible singular integral complete Gorenstein curve of arithmetic genus $g>1$ defined
over an algebraically closed field $\k$. Given a smooth point $P$ of $\C$, let 
$\N$ be the Weierstrass semigroup of $\C$ at $P$, that is the set of nonnegative integers 
$n\in\N$ such that there is a rational function $x_n$ on $\C$ whose pole divisor $nP$. Let us assume 
that the semigroup $\N$ is symmetric, i.e.  $n\in\N$ if and only if $\l_g-n\notin\N$,
where  $\l_g$ is the last gap. Equivalently, $\l_g$ is the largest possible, $\l_g=2g-1$.
A basis for the vector space $H^{0}(\C,\mathcal{O}_{\C}((2g-2)P))$ is
$\{x_{n_0}, x_{n_1},\ldots, x_{n_{g-1}}\}$, 
and thus $\mathcal{O}_{\C}((2g-2)P)\cong\omega$, where $\omega$ is 
the dualizing sheaf of $\C$. By assuming that $\C$ is nonhyperelliptic, 
the canonical morphism
\[
(x_{n_0}: x_{n_1}:\ldots: x_{n_{g-1}}):\C\hookrightarrow\mathbb{P}^{g-1}
\] 
is an embedding.
Thus $\C$ becomes a curve of genus $g$ of degree $2g-2$ in $\mathbb{P}^{g-1}$ and the 
integers $\l_i-1$ are the contact orders of the curve with the 
hyperplanes at $P=(0:\ldots:0:1)$. 
Conversely, any nonhyperelliptic symmetric
semigroup $\N$ can be realized as the Weierstrass semigroup of the canonical Gorenstein
 \textit{monomial curve}
\[
\C_{\N}:=\{(s^{n_0}t^{\l_g-1}: s^{n_1}t^{\l_{g-1}-1}:\ldots:s^{n_{g-2}}t^{\l_2-1}: s^{n_{g-1}}t^{\l_1-1})\,\vert\,(s:t)\in\mathbb{P}^1\}\subset \mathbb{P}^{g-1}\,
\]
at its unique point $P=(0:\dots:0:1)$ at the infinity. 


Now we recall the construction of a compactification of $\M$, when $\N$ is symmetric, that was first introduced by Stoehr \cite{St93}, 
then improved by Contiero--Stoehr \cite{c2013} and generalized
by Contiero-Fontes \cite{CF}.
Let us start with a pointed canonical Gorenstein curve $(\C,P)$ whose Weierstrass semigroup $\N$ at $P$ is symmetric.
We know from  \cite[Theorem 1.3]{O91} that each nongap $s\leq4g-4$ can be written as a 
sum of two others nongaps, namely
\[
s=a_s+b_s,\quad a_s\leq b_s\leq2g-2.
\]
By choosing $a_s$ the smallest possible, the $3g-3$ rational functions $x_{a_s}x_{b_s}$ form a  $P$-hermitian 
basis of the global sections $H^{0}(\C,\mathcal{O}_{\C}(2(2g-2)P))$ of the 
bicanonical divisor. 
The  homomorphism
\[ \k[X_{n_0},\ldots,X_{n_{g-1}}]_2\longrightarrow H^{0}(\C,\mathcal{O}_{\C}(2(2g-2)P))
\]
induced by the substitutions $X_{n_{i}}\longmapsto x_{n_i}$ is surjective
and the kernel is the vector space of quadratic 
forms in  the ideal of $\C\subset\proj^{g-1}$.

%
Now, given a nongap $s\leq4g-4$, let us consider all the partitions of $s$ as sum of 
two nongaps not greater than $2g-2$,
\[
s=a_{si}+b_{si},\text{ with }a_{si}\leq b_{si}\ (i=0,\ldots, \nu_s), \text{ where } a_{s0}:=a_s.
\]
Hence, given a nongap $s\leq4g-4$ and $i=1,\ldots,\nu_s$  we can write
\[
x_{a_{si}}x_{b_{si}}=\displaystyle\sum_{n=0}^{s}c_{sin}x_{a_{n}}x_{b_{n}},
\]
where $a_n$ and $b_n$ are nongaps of $\N$ whose sum is equal to $n$, and $c_{sin}$ are suitable constants in $\k$.
By normalizing 
the coefficients $c_{sis}=1$, it follows that the 
${g+1\choose 2}-(3g-3)=(g-2)(g-3)/2$ quadratic forms 
\[
F_{si}=X_{a_{si}}X_{b_{si}}-X_{a_s}X_{b_s}-\sum_{n=0}^{s-1}c_{sin}X_{a_{n}}X_{b_{n}}
\] 
vanish identically on the canonical curve $\C$, where the coefficients 
$c_{sin}$ are uniquely determined constants. They are linearly independent, 
hence they form a basis for the space of quadratic relations in $I(\C)$.
 
We need to make some assumptions on the symmetric semigroup $\N$ to assure 
that the ideal $I(\C)$ is generated by its quadratic relations $F_{si}$.
Precisely, we have to assume that $\N$ satisfies $3<n_1<g$ and $\N\neq\langle4, 5\rangle$. 
According to \cite[Lemma 3.1]{CF}, both the conditions $n_1\neq3$ and $n_1\neq g$ 
avoid possible trigonal Gorenstein curves whose Weierstrass semigroup at $P$ 
equal to $\N=\langle3, g+1\rangle$ and 
$\N=\langle g, g+1,\ldots, 2g-2\rangle$, respectively. This two avoided cases are also treated
by  similar techniques in \cite{CF}, but suitable cubic forms are  required to compute the
ideal of the canonical Gorenstein curve $\C$. 
So, making the above assumptions on the semigroup $\N$, it follows by the Enriques--Babbage 
Theorem  that  $\C$ is nontrigonal and  not isomorphic to a plane quintic. 
Hence the ideal of $\C$ is generated by the $(g-2)(g-3)/2$ quadratic
forms $F_{si}$, c.f. \cite[Theorem 2.5]{c2013}.

On the other hand, starting with a symmetric semigroup $\N$ with $3<n_1<g$ and $\N\neq\langle4, 5\rangle$,
let us introduce the following $(g-2)(g-3)/2$ quadratic forms 
\begin{equation}\label{unfold}
F_{si}=X_{a_{si}}X_{b_{si}}-X_{a_s}X_{b_s}-\displaystyle\sum_{n=0}^{s-1}c_{sin}X_{a_{n}}X_{b_{n}},
\end{equation} where $c_{sin}$ are constants to be determined in order that the intersection of the 
$V(F_{si})$ in $\proj^{g-1}$ is a canonical Gorenstein curve of genus $g$ whose Weierstrass semigroup at $P$ is 
$\N$. We attach to the variable $X_n$ the weight $n$ and to 
each coefficient $c_{sin}$ the weight $s-n$. If we consider $F_{si}$ as a polynomial
expression, not only in the variables $X_n$, but also in the coefficients $c_{sin}$, then it becomes quasi-homogeneous of weight $s$.

Since the coordinates functions $x_n$, $n\in\N$ 
and $n\leq2g-2$, are not uniquely determined by their pole divisor $nP$,
we may transform
\[
X_{n_i}\longmapsto X_{n_i}+\sum_{j=0}^{i-1}c_{n_in_{i-j}}X_{n_{i-j}},
\]
for each $i=1,\ldots, g-1$, and so we can normalize 
$\frac{1}{2}g(g-1)$ of the coefficients $c_{sin}$ to be zero, see 
\cite[Proposition 3.1]{St93}. Due to these normalizations and the 
normalizations of the coefficients $c_{sin}=1$ with $n=s$, the only freedom left to us 
is to transform $x_{n_i}\mapsto c^{n_i}x_{n_i}$ for $i=1,\ldots, g-1$.

Let 
\[
F_{si}^{(0)}:=X_{a_{si}}X_{b_{si}}-X_{a_s}X_{b_s}
\] 
be the quadratic forms that generate the ideal of the canonical monomial curve
$\C_{\N}$, cf. \cite[Lemma 2.2]{c2013}. 
One of the keys to construct a compactification of $\M$ is the following lemma.

\begin{syzlem}[cf. \cite{c2013}]\label{syzlem}
For each one of the $\frac{1}{2}(g-2)(g-5)$ quadratic binomials $\F{s'i'}^{(0)}$
different from 
$\F{n_i+2g-2,1}^{(0)}$ $(i=0,\dots,g-3)$, there is
a linear syzygy of the form
\begin{equation}\label{linsyzy}
X_{2g-2}\F{s'i'}^{(0)}+\sum_{nsi}\varepsilon_{nsi}^{(s'i')}X_{n}\F{si}^{(0)}=0
\end{equation}
where the coefficients $\varepsilon_{nsi}^{(s'i')}$ are integers equal to $1$,
$-1$ or $0$, and the sum is taken over the
nongaps $n<2g-2$ and the double indexes $si$ such that $n+s=2g-2+s'$.
\end{syzlem}

The explicit construction of a compactification of $\M$ starts by replacing 
the initials binomials $\F{s'i'}^{(0)}$ and $\F{si}^{(0)}$ in equation \eqref{linsyzy} 
by the corresponding 
forms $\F{s'i'}$  and
$\F{si}$ displayed in equation \eqref{unfold}, obtaining a 
linear combination of cubic monomials of weight $<s'+2g-2$. By virtue of \cite[Lemma 2.4]{c2013}  and its proof
this linear combination of cubic monomials admits the following decomposition.
\begin{equation*}
 X_{2g-2}\F{s'i'}+\sum_{nsi}\varepsilon_{nsi}^{(s'i')}X_{n}\F{si}=
 \sum_{nsi}\eta_{nsi}^{(s'i')}X_{n}\F{si}+R_{s'i'}
\end{equation*}
where the sum on the right hand side is taken over the nongaps $n\leq 2g-2$ and
the double indexes $si$ with $n+s<s'+2g-2$, the coefficients
$\eta_{nsi}^{(s'i')}$ are constants, and where $R_{s'i'}$ is a linear
combination of cubic monomials of pairwise different weights $<s'+2g-2$.

For each nongap $m<s'+2g-2$, let $\varrho_{s'i'm}$ be the unique coefficient
of $R_{s'i'}$ of weight $m$. It is a quasi-homogeneous polynomial expression
of weight $s'+ 2g - 2 - m$ in the coefficients $c_{sin}$.

\begin{thm}\cite[Theorem 2.6]{c2013}\label{teo3}
Let $\N$ be a symmetric semigroup of genus $g$ sa\-tis\-fying $3<n_1<g$ and 
$\N\neq\langle4, 5\rangle$.
The isomorphism classes of the pointed complete integral Gorenstein curves with Weierstrass 
semigroup $\N$ correspond bijectively to the orbits of the $\mathbb{G}_m(\k)$-action
\begin{equation*}
(c;\ldots,c_{sin},\ldots)\longmapsto(\ldots,c^{s-n}c_{sin},\ldots)
\end{equation*}
on the  affine quasi-cone of the vectors whose coordinates are the 
coefficients $c_{sin}$ of the normalized quadratic $F_{si}$ satisfying 
the quasi-homogeneous equations $\varrho_{s'i'm}=0$. 
\end{thm}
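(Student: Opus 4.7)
The plan is to establish the bijection in both directions and identify the residual coordinate freedom with the $\G_m$-action. Starting with a pointed integral Gorenstein curve $(\C,P)$ of genus $g$ with symmetric Weierstrass semigroup $\N$, I would use the canonical embedding $(x_{n_0}:\cdots:x_{n_{g-1}})\colon\C\hookrightarrow\proj^{g-1}$ recalled above. Under the hypotheses $3<n_1<g$ and $\N\neq\langle 4,5\rangle$, the Enriques--Babbage theorem guarantees that $I(\C)$ is generated by exactly $(g-2)(g-3)/2$ quadratic forms, and expanding the products $x_{a_{si}}x_{b_{si}}$ in the $P$-hermitian basis shows they must take the shape $\F{si}$ of \eqref{unfold} with uniquely determined coefficients $c_{sin}$. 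The residual freedom in the choice of coordinate functions reduces, via the transformations $X_{n_i}\mapsto X_{n_i}+\sum c_{n_in_{i-j}}X_{n_{i-j}}$ that set $g(g-1)/2$ of the coefficients to zero, to the $\G_m$-scaling $x_{n_i}\mapsto c^{n_i}x_{n_i}$, which a direct substitution shows acts on the remaining coefficients as $c_{sin}\mapsto c^{s-n}c_{sin}$.

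Next I would verify that these normalized coefficients satisfy every equation $\varrho_{s'i'm}=0$. Invoking the Syzygy Lemma \ref{syzlem} and substituting $\F{si}$ for $\F{si}^{(0)}$ in \eqref{linsyzy} produces the identity $X_{2g-2}\F{s'i'}+\sum\varepsilon_{nsi}^{(s'i')}X_n\F{si}=\sum\eta_{nsi}^{(s'i')}X_n\F{si}+R_{s'i'}$ introduced just before the theorem. The left-hand side visibly lies in $I(\C)$, hence so does $R_{s'i'}$. The normalizations fix a distinguished basis monomial $X_{a_m}X_{b_m}$ in each weight $m\leq 4g-4$, and the prescribed Hilbert function of the canonical coordinate ring of $\C$ forces the cubic monomials $X_k X_{a_m}X_{b_m}$ of distinct weights to be linearly independent modulo $I(\C)$ in their respective graded pieces. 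Since $R_{s'i'}$ is a combination of such cubic monomials of pairwise distinct weights with coefficients $\varrho_{s'i'm}$, each of these coefficients must vanish individually.

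For the converse, given a vector on the quasi-cone $\{\varrho_{s'i'm}=0\}$ I would run the computation in reverse: the vanishing of the $\varrho$'s promotes the decomposition to an honest linear syzygy deforming that of the monomial curve $\C_{\N}$, which in turn forces $\k[X_{n_0},\ldots,X_{n_{g-1}}]/(\F{si})$ to share the Hilbert series of the canonical coordinate ring of $\C_{\N}$. Consequently $V(\F{si})\subset\proj^{g-1}$ is a curve of arithmetic genus $g$ and degree $2g-2$ canonically embedded by its dualizing sheaf, hence integral and arithmetically Gorenstein, with $P=(0:\cdots:0:1)$ a smooth point whose Weierstrass semigroup is $\N$. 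Two such vectors yield isomorphic pointed curves iff their canonical embeddings are projectively equivalent, and after the normalizations of the first paragraph this equivalence is precisely the $\G_m$-orbit, completing the bijection. The main obstacle is the Hilbert-function step: showing rigorously that the vanishing of the $\varrho$'s together with the normalizations suffice to pin down the Hilbert series, and that the resulting scheme is automatically integral and Gorenstein rather than merely having the right numerical invariants. This is the technical heart of \cite{c2013}, and I would follow their lifting procedure, exploiting flatness over the monomial fibre to propagate integrality and the Gorenstein property from $\C_{\N}$ to the general member of the family.
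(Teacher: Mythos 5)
Your outline matches the construction the paper itself uses: Theorem \ref{teo3} is stated as a citation of \cite[Thm.~2.6]{c2013} and is not reproved here, but the material of Section \ref{CScons} (the normalizations, the Syzygy Lemma, the division producing the remainder $R_{s'i'}$, and the linear independence of monomials of pairwise distinct pole order at $P$ forcing each $\varrho_{s'i'm}$ to vanish) is precisely the argument you sketch, and your identification of the residual coordinate freedom with the $\mathbb{G}_m$-action is the intended one. The step you explicitly defer --- that lifting the syzygies pins down the Hilbert function and that the resulting scheme is an integral Gorenstein curve rather than merely having the right numerical invariants --- is indeed the technical core of \cite{c2013}, so your proposal is a faithful summary of the same route rather than a divergent proof.
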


\begin{rmk}
This construction  can be viewed as a variant of Hauser's algorithm 
to compute versal deformation spaces \cite{Hau83, Hau85} , see also \cite{Stev13}.
The standard approach in deformation theory is to successively lift infinitesimal deformations
to higher order, collecting the obstructions at each stage. In Hauser's algorithm 
the defining equations of a singularity are first unfolded,  an unobstructed problem,
and only  then flatness is imposed by lifting relations using a division procedure.
The unique remainder leads to equations on the unfolding parameters.
In general there are infinitely many such parameters, but if the singularity is
quasi-homogeneous, we can restrict to the non-positive part of the unfolding
and we obtain equations in finitely many variables for the versal deformation
in non-positive weight.

By  setting $X_0=1$ we see that the equations $\varrho_{s'i'm}=0$ above
give the miniversal deformation in negative weight of the affine monomial
curve $\C_\N$ in a non-minimal embedding in $\AF^{g-1}$.
The compactification of the moduli space $\M$ in Theorem \ref{teo3}
corresponds to Pinkham's theorem \ref{pinkhamthm}. 

The advantage of working with a non-minimal embedding is that all equations
become quadratic, but at the price of an increase in the number of variables. 
\end{rmk}

\section{Families of symmetric $6$-semigroups}\label{Fam}

In this section we apply the techniques briefly described in the above section to deal
with families of symmetric semigroups. If the symmetric semigroup is generated by less
than five elements, the dimension of the moduli variety $\M$ is well known, as  noted in
Section \ref{newbound}. So, we must
consider symmetric semigroups of multiplicity greater than $5$, just because a symmetric
semigroup of multiplicity $m$ can be generated by $m-1$ elements. 

\subsection{A first family}
 For each positive integer $\tau$ consider the semigroup
\begin{align}\notag
    \N&=\langle 6, 3+6\tau, 4+6\tau, 7+6\tau, 8+6\tau\rangle\\
    &=6\mathbb{N}\sqcup\bigsqcup_{j\in\{3, 4, 7, 8\}}^{}(j+6\tau+6\mathbb{N})\sqcup(11+12\tau+6\mathbb{N}).\label{semigp}
\end{align}
of multiplicity $6$ minimally generated by five elements. Counting the number of gaps of 
$\N$ and picking up the largest one, we find
\[
g=3+6\tau \quad \text{and}\quad \l_g=12\tau+5=2g-1,
\]
showing that $\N$ is a symmetric semigroup. 

Let $\mathcal{C}$ be a complete integral Gorenstein 
curve and $P$ be a smooth point of $\C$ whose Weierstrass semigroup at $P$ is $\N$. 
For each $n\in\N$, let $x_n$ be  a rational function on $\C$ with pole divisor $nP$. 
We abbreviate
\[
x:=x_6\quad \text{and}\quad y_j:=x_{j+6\tau}\ (j=3, 4, 7, 8)
\]
and normalize 
\[
x_{6i}=x^i \quad \text{and} \quad x_{j+6\tau+6i}=x^iy_j, \  \forall\, i\geq1.
\]

By considering the above the normalizations we see that the functions
$1,x, y_3, y_4, y_7, y_8$ generate the ring $\bigoplus_k H^{0}(\C, kP)$
and therefore induce an embedding 
\[
  (1:x: y_3: y_4: y_7: y_8):\C\hookrightarrow\mathbb{P}:=\mathbb{P}(1,6,3+6\tau,4+6\tau,
  7+6\tau,8+6\tau)
\]
into a weighted projective space $\proj$, whose image we call $\mathcal{D}$.
Instead of studying the ideal of the canonical curve $\C\subset \proj^{g-1}$, 
which has $(g-2)(g-3)/2$ quadratic generators,
we study the ideal (and the relations between its generators) of the curve 
$\mathcal{D}\subset\proj$. The advantage is that the number of generators of the ideal of $\mathcal{D}$  does not depends on the genus $g$, see Lemma \ref{lem11} below.

We work in the affine chart on  $\proj$ obtained by setting the first
coordinate equal to $1$.
Let $X, Y_3, Y_4, Y_7, Y_8$ be indeterminates whose weights are
 $6, 3+6\tau, 4+6\tau, 7+6\tau, 8+6\tau$, respectively. For each $n\in\N$, 
 we introduce a monomial $Z_n$ of weight $n$ as follows
 \[
 Z_{6i}=X^i,\ Z_{j+6\tau+6i}=Y_jX^i\text{ and }Z_{11+12\tau+6i}=Y_3Y_8X^i.
 \]
By writing the nine products $y_iy_j, (i,j)\neq(3,8)$ as linear combination of the basis 
 elements, we obtain nine polynomials in the indeterminates $X, Y_3, Y_4, Y_7, Y_8$ that vanish 
 identically on the affine  curve $\mathcal{D}\cap\mathbb{A}^5$, say 
  \begin{equation}\label{fam1pol}
  F_i=F_i^{(0)}+\sum_{j=0}^{12\tau+i}f_{ij}Z_{12\tau+i-j}\qquad (i=6, 7, 8,10,11, 12, 14, 15,16),
   \end{equation}
 where \begin{equation}\label{initial1}
\begin{array}{lll}
   F_6^{(0)}= Y_3^2-X^{2\tau+1},&  F_7^{(0)}= Y_3Y_4-X^{\tau}Y_7,&  F_8^{(0)}= Y_4^2-X^{\tau}Y_8,\\
    F_{10}^{(0)}= Y_3Y_7-X^{\tau+1}Y_4,& F_{11}^{(0)}=Y_4Y_7-Y_3Y_8,& F_{12}^{(0)}= Y_4Y_8-X^{2\tau+2},\\
     F_{14}^{(0)}= Y_7^2-X^{\tau+1}Y_8,& F_{15}^{(0)}=Y_7Y_8-X^{\tau+2}Y_3,& F_{16}^{(0)}=Y_8^2-X^{\tau+2}Y_4,
\end{array}
\end{equation}
and the index $j$ only varies through integers with $12\tau+i-j\in\N$. The 
proof of the next lemma is very similar to \cite[Lemma 4.1]{c2013}.  
    
  \begin{lem}\label{lem11} The ideal of the affine 
  curve $\mathcal{D}\cap\mathbb{A}^5$ is equal to the ideal $\mathcal{I}$ generated by the 
  forms $F_i \ (i=6, 7, 8,10,11, 12, 14, 15, 16)$. In particular the ideal of the affine monomial curve 
  \[
\C_{\N}=\{(t^6, t^{3+6\tau}, t^{4+6\tau}, t^{7+6\tau}, t^{8+6\tau}) \mid t\in\k\}
\]
is generated by the initial forms $F_i^{(0)}\ (i=6, 7, 8, 10, 11, 12, 14, 15, 16)$.
   \end{lem}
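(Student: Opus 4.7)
The plan is to prove the equality of ideals by comparing the quotient ring $A := \k[X,Y_3,Y_4,Y_7,Y_8]/\mathcal{I}$ to the coordinate ring of $\D \cap \AF^5$ as modules over $\k[X]$, mimicking the argument in \cite[Lemma 4.1]{c2013}. The inclusion $\mathcal{I} \subseteq I(\D \cap \AF^5)$ is immediate from the construction of the $F_i$: each was obtained by expressing a product $y_iy_j$ as a $\k$-linear combination of the basis of $\bigcup_n H^0(\C, nP)$ consisting of functions with pole order exactly a nongap of $\N$. For the reverse inclusion I would exhibit the natural surjection $A \twoheadrightarrow \k[\D \cap \AF^5]$ and show that both sides are finitely generated $\k[X]$-modules sharing the same set of six generators.

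The coordinate ring $\k[\D \cap \AF^5] = \bigcup_n H^0(\C, nP)$ is a free $\k[x]$-module of rank $6$ whose basis $\{1, y_3, y_4, y_7, y_8, y_3 y_8\}$ is indexed by the Ap\'ery set $\mathrm{Ap}(\N,6) = \{0, 3+6\tau, 4+6\tau, 7+6\tau, 8+6\tau, 11+12\tau\}$ of smallest nongaps in each residue class modulo $6$, the element $11+12\tau$ corresponding to $y_3 y_8$. The main technical step is to show that $A$ is generated as a $\k[X]$-module by the six corresponding classes $\{1, Y_3, Y_4, Y_7, Y_8, Y_3 Y_8\}$. Each of the nine forms $F_i$ has a leading binomial $F_i^{(0)}$ that expresses a quadratic monomial $Y_iY_j$ with $(i,j)\neq(3,8)$ as a $\k[X]$-multiple of one of the six generators, and the correction terms $f_{ij} Z_{12\tau+i-j}$ are, by the very definition of the $Z_n$, already $\k[X]$-multiples of one of those six generators. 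Hence every quadratic monomial in the $Y$'s lies modulo $\mathcal{I}$ in the $\k[X]$-span of the six generators. Iterating this substitution on monomials of higher $Y$-degree, and noting that the correction terms have strictly smaller weighted degree than the leading binomial they belong to, yields the desired finite generation.

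Once $A$ is generated by six elements as a $\k[X]$-module, the surjection onto the rank-$6$ free $\k[X]$-module $\k[\D \cap \AF^5]$ must be an isomorphism, whence $\mathcal{I} = I(\D \cap \AF^5)$. The statement for the monomial curve $\C_\N$ follows by setting all $f_{ij}$ equal to zero: the initial ideal generated by the $F_i^{(0)}$ takes the role of $\mathcal{I}$, the semigroup ring $\k[\N]$ takes the role of $\k[\D \cap \AF^5]$, and the identical argument --- now with no correction terms to worry about --- concludes. I expect the main obstacle to be verifying termination and consistency of the iterated reduction in the second paragraph, but the fact that the correction terms in each $F_i$ are already in normal form reduces the bookkeeping to a finite combinatorial check, closely analogous to the classical Gr\"obner basis argument for the ideal of a numerical semigroup monomial curve.
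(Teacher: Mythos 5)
Your proof is correct and follows essentially the same route as the paper's: the heart in both cases is the reduction, modulo $\mathcal{I}$, of an arbitrary polynomial to a $\k[X]$-combination of $1, Y_3, Y_4, Y_7, Y_8, Y_3Y_8$ (equivalently a sum $\sum c_nZ_n$ of monomials of pairwise distinct weights in $\N$), followed by the fact that these restrict to a free $\k[x]$-basis of the coordinate ring of $\D\cap\AF^5$. Your packaging of the final step as a surjection from a six-generated $\k[X]$-module onto a free module of rank six is only a mild reformulation of the paper's direct appeal to the linear independence of the functions $x_n$ with pairwise distinct pole orders.
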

\begin{proof}
It is clear that $\mathcal{I}\subseteq I(\mathcal{D}\cap\mathbb{A}^5)$. Let $f$ be a polynomial 
in the variables $X, Y_3, Y_4, Y_7, Y_8$. By applying induction 
on the degree of $f$ in the indeterminate $Y_3, Y_4, Y_7, Y_8$ we note that, 
module the ideal generated by the 
nine forms $F_i$, the monomials of this polynomial $f$ are not divisible by the 
nine products $Y_iY_j$, $(i,j)\neq(3,8)$, hence the class of $f$ is a sum $\sum c_nZ_n$ 
of monomials $Z_n$ of pairwise different weights with $n\in\N$ and $c_n\in\k$. Thus the 
polynomial $f$ belongs to $I(\mathcal{D}\cap\mathbb{A}^5)$
if and only if the linear combination $\sum c_nZ_n$ vanishes identically on the curve 
$\mathcal{D}\cap\mathbb{A}^5$ and by taking the corresponding linear combination 
$\sum c_nx_n$ of rational functions on $\k(\C)$ we have $c_n=0$ for 
each $n\in\N$, hence $f$ belongs to $\mathcal{I}$.     
\end{proof}
   
Now let us invert the above situation. Given the above fixed symmetric semigroup $\N$,  we introduce  nine isobaric polynomials like in \eqref{fam1pol}:
     \begin{equation}\label{fam1pol2}
  F_i=F_i^{(0)}+\sum_{j=0}^{12\tau+i}f_{ij}Z_{12\tau+i-j}\qquad (i=6, 7, 8,10,11, 12, 14, 15,16)
   \end{equation}
in  the polynomial ring $\k[X,Y_3,Y_4,Y_7,Y_8]$, where the coefficients $f_{ij}$ have  weight $j$. 
We want relations on the 
$f_{ij} $ 
in order that they give rise to a Gorenstein curve in $\proj$ 
whose Weierstrass semigroup is $\N$ at 
the marked point $P=(0:0:0:0:0:1)$. Equivalently, we want equations on the
$f_{ij} $ such that the polynomials \eqref{fam1pol2} define the miniversal
deformation of the affine monomial curve $\C_\N$.

To  express these conditions in a concise manner we first note that the coordinate ring
of the monomial curve $\C_{\N}$ is a free $k[X]$-module
generated by $1$, $Y_3$, $Y_4$, $Y_7$, $Y_8$ and $Y_3Y_8$; this corresponds to the decomposition of the semigroup in \eqref{semigp}. We write the polynomials
\eqref{fam1pol2} as polynomials in the $Y_i$ with coefficients in $k[X]$:

\begin{equation}
\label{fam1pol3}
\begin{aligned}
F_{6} &= \Y(3)^2-X^{2\tau+1}+\f(6)(2)\Y(4)+\f(6)(3)\Y(3)+\f(6)(4)\Y(8)+\f(6)(5)\Y(7)+\f(6)(6)\\
F_{7} &= \Y(3)\Y(4)-X^{\tau}\Y(7)+\f(7)(1)+\f(7)(3)\Y(4)+\f(7)(4)\Y(3)+\f(7)(5)\Y(8)+\f(7)(6)\Y(7)\\
F_{8} &= \Y(4)^2-X^{\tau}\Y(8)+\f(8)(1)\Y(7)+\f(8)(2)+\f(8)(4)\Y(4)+\f(8)(5)\Y(3)+\f(8)(6)\Y(8)\\
F_{10} &= \Y(3)\Y(7)-X^{\tau+1}\Y(4)+\f(10)(1)\Y(3)+\f(10)(2)\Y(8)+\f(10)(3)\Y(7)+\f(10)(4)+\f(10)(6)\Y(4)\\
F_{11} &= \Y(4)\Y(7)-\Y(3)\Y(8)+\f(11)(1)\Y(4)+\f(11)(2)\Y(3)+\f(11)(3)\Y(8)+\f(11)(4)\Y(7)+\f(11)(5)\\
F_{12} &= \Y(4)\Y(8)-X^{2\tau+2}+\f(12)(1)\Y(3)\Y(8)+\f(12)(2)\Y(4)+\f(12)(3)\Y(3)+\f(12)(4)\Y(8)+\f(12)(5)\Y(7)+\f(12)(6)\\
F_{14} &= \Y(7)^2-X^{\tau+1}\Y(8)+\f(14)(1)\Y(7)+\f(14)(2)+\f(14)(3)\Y(3)\Y(8)+\f(14)(4)\Y(4)+\f(14)(5)\Y(3)+\f(14)(6)\Y(8)\\
F_{15} &= \Y(7)\Y(8)-X^{\tau+2}\Y(3)+\f(15)(1)\Y(8)+\f(15)(2)\Y(7)+\f(15)(3)+\f(15)(4)\Y(3)\Y(8)+\f(15)(5)\Y(4)+\f(15)(6)\Y(3)\\
F_{16} &= \Y(8)^2-X^{\tau+2}\Y(4)+\f(16)(1)\Y(3)+\f(16)(2)\Y(8)+\f(16)(3)\Y(7)+\f(16)(4)+\f(16)(5)\Y(3)\Y(8)+\f(16)(6)\Y(4)
\end{aligned}
\end{equation}
Here  $f_i^{(j)}=\sum_{k=0}^{\rho}f_{i,j+6k}X^{\rho-k}$ with $\rho$ determined
by the condition that the polynomials are isobaric. If $i-j=6\varepsilon$, then
$\rho=2\tau+\varepsilon$; if $i-j=6\varepsilon+1$ or $i-j=6\varepsilon+2$ 
then $\rho=\tau-1+\varepsilon$; if $i-j=6\varepsilon+3$ or $i-j=6\varepsilon+4$ 
then $\rho=\tau+\varepsilon$ and finally $\rho=0$ for $i-j=11$.

Some of the coefficients can be made to vanish by homogeneous
coordinate transformations of the form
 \begin{align*}
 X&\mapsto X+c_6\\
  Y_3&\mapsto Y_3+\textstyle\sum_{i=0}^{\tau}c_{3+6\tau}X^{\tau-i}\\
Y_4&\mapsto Y_4+c_1Y_3+\textstyle\sum_{i=0}^{\tau}c_{4+6\tau}X^{\tau-i}\\
 Y_7&\mapsto Y_7+c_3Y_4+c_4Y_3+\textstyle\sum_{i=0}^{\tau+1}c_{1+6\tau}X^{\tau+1-i}\\
 Y_8&\mapsto Y_8+c_1'Y_7+c_4'Y_4+c_5Y_3+\textstyle\sum_{i=0}^{\tau+1}c_{2+6\tau}X^{\tau+1-i},\\
 \end{align*}
where $c_i$ and  $c_i'$ are constants of weight $i$. We normalize
\begin{equation}\label{normfam1}
\begin{array}{r@{}l@{}l@{}l}
f_{7}^{(3)}&{}=f_{11}^{(4)}&{}=f_{10}^{(1)}&{}=f_{11}^{(2)}=0,\\
f_{12}^{(1)}&{}=f_{14}^{(3)}&{}=f_{15}^{(4)} &{}=f_{16}^{(5)} = 0.
\end{array}
\end{equation}
There are still three normalizations left, which can be use to make the first coefficient in an $f_i^{(j)}$ to zero: we can take $f_{8,1}=f_{12,4}=f_{8,6}=0$.
Then  the only freedom left is given by the 
 $\mathbb{G}_m(\k)$-action.

 The Syzygy Lemma applied to the nine initial isobaric forms in \eqref{initial1},
 give rises to only eight syzygies of the affine monomial curve $\C_\N$, namely
 \begin{equation}\label{syz}
 \begin{array}{l}
 Y_4F_{6}^{(0)}-Y_3F_7^{(0)}-X^{\tau}F_{10}^{(0)}=0 \\
 Y_4F_{7}^{(0)}-Y_3F_{8}^{(0)}+X^{\tau}F_{10}^{(0)}=0\\
 Y_4F_{10}^{(0)}-Y_7F_7^{(0)}+X^{\tau+1}F_{8}^{(0)}-X^{\tau}F_{14}^{(0)}=0\\
 Y_4F_{11}^{(0)}-Y_7F_8^{(0)}+Y_8F_7^{(0)}=0\\
 Y_4F_{12}^{(0)}-Y_{8}F_8^{(0)}-X^{\tau}F_{16}^{(0)}=0\\
 Y_4F_{14}^{(0)}-Y_{8}F_{10}^{(0)}-Y_{7}F_{11}^{(0)}=0\\
 Y_4F_{15}^{(0)}-Y_7F_{12}^{(0)}+X^{\tau+2}F_{7}^{(0)}=0\\
 Y_4F_{16}^{(0)}-Y_8F_{12}^{(0)}+X^{\tau+2}F_{8}^{(0)}=0.\\
 \end{array}
 \end{equation}
 The total number of syzygies is 16. The other syzygies can easily be found by lifting the
syzygies of the zero-dimensional ring obtained by setting $X=0$.
 
Now we replace in the syzygies the initial isobaric forms $F_{i}^{(0)}$  by the
associated unfolded $F_{i}$  in \eqref{fam1pol3}, with the normalizations
\eqref{normfam1}. 
Next we apply the division 
 algorithm to  quadratic monomials in the $Y_i$. 
For the first syzygy we get:
\begin{multline}
\label{syz1}
\quad Y_4F_{6}-Y_3F_7-X^{\tau}F_{10}
-\f(10)(2)F_8+\f(11)(3)F_7-\f(6)(4)F_{12}+\f(12)(4)F_6
-\f(7)(5)F_{11}+\f(8)(6)F_{10}= {}\\
R_5 Y_3Y_8 + R_2 Y_8 + R_3Y_7+ R_6 Y_4 + R_1 Y_3 +R_4
 \quad
\end{multline}
where
\begin{equation}
\label{Reqns}
\begin{aligned}
R_5&=\f(6)(5)-\f(7)(5)\\
R_2&=X^\tau \f(6)(2)-X^\tau \f(10)(2)+\f(6)(4)\f(7)(4)-\f(6)(3)\f(7)(5)-\f(6)(2)\f(8)(6)+\f(7)(6)\f(10)(2)-\f(6)(5)\f(11)(3)-\f(6)(4)\f(12)(4)\\
R_3&=X^\tau \f(6)(3)-X^\tau \f(10)(3)+\f(6)(5)\f(7)(4)-\f(6)(3)\f(7)(6)-\f(6)(2)\f(8)(1)+\f(7)(6)\f(10)(3)-\f(6)(4)\f(12)(5)\\
R_6&=\f(6)(6)-X^{\tau+1} \f(7)(6)-X^\tau \f(10)(6)+\f(6)(2)\f(7)(4)-\f(6)(2)\f(8)(4)+\f(7)(6)\f(10)(6)-\f(6)(5)\f(11)(1)-\f(6)(4)\f(12)(2)\\
R_1&=-\f(7)(1)-\f(6)(2)\f(8)(5)-\f(6)(4)\f(12)(3)\\
R_4&=X^{2\tau+2}\f(6)(4)-X^{2\tau+1}\f(7)(4)-X^\tau \f(10)(4)
 -\f(6)(3)\f(7)(1)+\f(6)(6)\f(7)(4)-\f(6)(2)\f(8)(2)+\f(7)(6)\f(10)(4)\\ &
 \qquad\qquad -\f(6)(5)\f(11)(5)-\f(6)(4)\f(12)(6)
\end{aligned}
\end{equation}
The condition that the syzygy between the $F_{i}^{(0)}$  lifts to a syzygy between
the $F_{i}$ is that the coefficients $R_i$ vanish. This in turn leads to
quasi-homogeneous equations between the coefficients $f_{ij}$. We observe that the
vanishing of
the coefficient $R_5$ of $Y_3Y_8$ gives the linear equation
$\f(6)(5)=\f(7)(5)$. The same happens for the other 15 syzygies.
We get the 16 linear equations
\begin{align*}
\f(15)(1)&=0,  & \f(11)(1)&=\f(14)(1), \\
\f(12)(2)&=\f(15)(2)=\f(16)(2),  &\f(6)(2)&=\f(10)(2), \\
\f(10)(3)&=0,  &\f(6)(3)&=-\f(11)(3),  &\f(16)(3)&=\f(12)(3), \\
\f(7)(4)&=\f(8)(4)=\f(12)(4),\\
\f(15)(5)&=\f(14)(5),  &\f(6)(5)&=\f(7)(5)& \f(8)(5)&=\f(12)(5), \\
\f(7)(6)&=\f(8)(6)& \f(10)(6)&=\f(14)(6)&\f(16)(6)&=\f(15)(6)
\end{align*}
We use these equations to reduce the number of variables. Together with 
the normalizations \eqref{normfam1} our polynomials \eqref{fam1pol3}
reduce then to
\begin{equation}
\label{fam1pol4}
\begin{aligned}
F_{6} &= \Y(3)^2-X^{2\tau+1}+\f(10)(2)\Y(4)-\f(11)(3)\Y(3)+\f(6)(4)\Y(8)+\f(7)(5)\Y(7)+\f(6)(6)\\
F_{7} &= \Y(3)\Y(4)-X^{\tau}\Y(7)+\f(7)(1)+\f(12)(4)\Y(3)+\f(7)(5)\Y(8)+\f(8)(6)\Y(7)\\
F_{8} &= \Y(4)^2-X^{\tau}\Y(8)+\f(8)(1)\Y(7)+\f(8)(2)+\f(12)(4)\Y(4)+\f(12)(5)\Y(3)+\f(8)(6)\Y(8)\\
F_{10} &= \Y(3)\Y(7)-X^{\tau+1}\Y(4)+\f(10)(2)\Y(8)+\f(10)(4)+\f(14)(6)\Y(4)\\
F_{11} &= \Y(4)\Y(7)-\Y(3)\Y(8)+\f(14)(1)\Y(4)+\f(11)(3)\Y(8)+\f(11)(5)\\
F_{12} &= \Y(4)\Y(8)-X^{2\tau+2}+\f(16)(2)\Y(4)+\f(12)(3)\Y(3)+\f(12)(4)\Y(8)+\f(12)(5)\Y(7)+\f(12)(6)\\
F_{14} &= \Y(7)^2-X^{\tau+1}\Y(8)+\f(14)(1)\Y(7)+\f(14)(2)+\f(14)(4)\Y(4)+\f(14)(5)\Y(3)+\f(14)(6)\Y(8)\\
F_{15} &= \Y(7)\Y(8)-X^{\tau+2}\Y(3)+\f(16)(2)\Y(7)+\f(15)(3)+\f(14)(5)\Y(4)+\f(15)(6)\Y(3)\\
F_{16} &= \Y(8)^2-X^{\tau+2}\Y(4)+\f(16)(1)\Y(3)+\f(16)(2)\Y(8)+\f(12)(3)\Y(7)+\f(16)(4)+\f(15)(6)\Y(4)
\end{aligned}
\end{equation}
involving 25 coefficients $\f(i)(j)$.

To proceed further we note that  the equations \eqref{Reqns} still include 
$X$, which is a function on the monomial curve, in an explicit manner. It turns
out to be more convenient to use a local coordinate at infinity on this curve.
We put 
\[
X=t^{-6},\quad Y_3= t^{-6-3\tau},\quad Y_4=t^{-6-4\tau}, \quad
Y_7= t^{-6-7\tau}, \quad Y_8=t^{-8-3\tau}
\]
in the functions $F_i$ and the syzygies and clear denominators.
We give the variable $t$ the weight $-1$.
We consider the polynomials
 \[
 f_i:=\displaystyle\sum_{r=1}^{12\tau+i}F_i(t^{-6}, t^{-6-3\tau}, t^{-6-4\tau}, 
 t^{-6-7\tau}, t^{-8-3\tau})t^{i+12\tau}\
 \]
 and we write each one as the sum of its partial polynomials 
 \[
 f_i^{(j)}=\displaystyle\sum_{r\equiv j\mod{6}}^{}f_{ir}t^r,\ (j=1,\ldots,6),
 \]
 which are defined by collecting all terms whose exponents are in the 
 same residue class module $6$. In particular, the weight of each
 $f_i^{(j)}$ is now zero. Our previous
 $f_i^{(j)}=\sum_{k=0}^{\rho}f_{i,j+6k}X^{\rho-k}$
 becomes $f_i^{(j)}=\sum_{k=0}^{\rho}f_{i,j+6k}t^{j+6k}$, justifying the use of the 
 same notation. By using the same substitution on the syzygies, like \eqref{syz1},
 the left hand side becomes identically equal to the right hand side, as they give
 the unique expression of the restriction of both sides to the curve.
 The advantage is that the variable $t$ does no longer occur explicitly.
 
 We give the resulting equation from the first of the eight syzygies \eqref{syz}:
\[
f_6-f_7-f_{10}-\f(10)(2)f_8+\f(11)(3)f_7-\f(6)(4)f_{12}+\f(12)(4)f_6-\f(7)(5)f_{11}+\f(8)(6)f_{10} =0
\]
Writing out these equations for all syzygies in terms of the partial polynomials leads to  60
equations, some of which are zero while others coincide.
We further eliminate variables.
Equation $R_1$ in \eqref{syz1} becomes (using the linear equations) $\f(7)(1)=-\f(10)(2)\f(12)(5)-\f(6)(4)\f(12)(3)$.
Similarly we can eliminate all other $\f(i)(j)$ with $i-j\equiv 0 \bmod 6$, which are the
ones in \eqref{fam1pol4} that are not coefficients of an $\Y(i)$.
\begin{equation}
\label{without-y}
\begin{aligned}
\f(6)(6)&=\f(14)(6)+\f(8)(6)+\f(7)(5)\f(14)(1)+\f(6)(4)\f(16)(2)-\f(14)(6)\f(8)(6)\\
\f(7)(1)&=-\f(6)(4)\f(12)(3)-\f(10)(2)\f(12)(5)\\
\f(8)(2)&=-\f(7)(5)\f(12)(3)-\f(11)(3)\f(12)(5)\\
\f(10)(4)&=\f(14)(4)+\f(10)(2)\f(16)(2)-\f(7)(5)\f(14)(5)-\f(14)(4)\f(8)(6)\\
\f(11)(5)&=\f(14)(5)+\f(16)(1)\f(6)(4)+\f(10)(2)\f(12)(3)+\f(14)(1)\f(12)(4)-\f(14)(5)\f(8)(6)\\
\f(12)(6)&=\f(15)(6)+\f(8)(6)-\f(16)(1)\f(7)(5)-\f(11)(3)\f(12)(3)+\f(16)(2)\f(12)(4)-\f(15)(6)\f(8)(6)\\
\f(14)(2)&=\f(10)(2)+\f(14)(4)\f(12)(4)-\f(11)(3)\f(14)(5)-\f(10)(2)\f(15)(6)\\                 
\f(15)(3)&=\f(12)(3)-\f(14)(4)\f(12)(5)-\f(12)(3)\f(14)(6)\\
\f(16)(4)&=\f(14)(1)\f(12)(3)-\f(14)(5)\f(12)(5)
\end{aligned}
\end{equation}
Inserting these values reduces the number of equations to five.
\begin{lem}
The remaining $16$ partial polynomials  $f_{ij}$ satisfy the following five equations identically in $t$:
\begin{equation}
\label{baseeqns11}
\begin{aligned}
\f(16)(1)(1-\f(8)(6))+\f(8)(1)(1-\f(15)(6))+\f(12)(3)\f(12)(4)+\f(16)(2)\f(12)(5)=0\\
\f(8)(1)(1-\f(14)(6))-\f(14)(1)(1-\f(8)(6))+\f(7)(5)\f(16)(2)+\f(6)(4)\f(12)(3)=0\\
\f(11)(3)(1-\f(8)(6))+\f(8)(1)\f(10)(2)-\f(7)(5)\f(12)(4)+\f(6)(4)\f(12)(5)=0\\
\f(6)(4)(1-\f(15)(6))-\f(14)(4)(1-\f(8)(6))-\f(12)(4)(1-\f(14)(6))-\f(10)(2)\f(16)(2)=0\\
\f(7)(5)(1-\f(15)(6))+\f(14)(5)(1-\f(8)(6))-\f(12)(5)(1-\f(14)(6))+\f(10)(2)\f(12)(3)=0.
\end{aligned}
\end{equation}
\end{lem}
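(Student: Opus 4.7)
The plan is to obtain the five equations \eqref{baseeqns11} by systematically carrying out, for each of the $16$ syzygies of the initial monomial curve $\C_\N$, the lift--and--divide procedure illustrated for the first syzygy in \eqref{syz1}. First I would write down the remaining seven syzygies among the $F_i^{(0)}$ listed in \eqref{syz}, together with the eight additional ones obtained by lifting the syzygies of the Artinian ring $\k[\N]/(X)$, which are very easy to enumerate since $\k[\N]/(X)$ is generated by $\Y(3),\Y(4),\Y(7),\Y(8)$ with the nine quadratic relations obtained from \eqref{initial1} by setting $X=0$. For each syzygy, substituting $F_i$ for $F_i^{(0)}$ and reducing modulo the $F_i$ produces a remainder of the form $R_5\Y(3)\Y(8)+R_6\Y(4)+R_3\Y(7)+R_2\Y(8)+R_1\Y(3)+R_4$, whose six coefficients $R_k$ are forced to vanish because the $F_i$ cut out a flat family.

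Next I would separate the linear and higher-order information. The coefficients of $\Y(i)$ and $\Y(3)\Y(8)$ that are linear in the unfolding parameters yield the sixteen linear relations listed just before \eqref{fam1pol4}; I use these to pass from \eqref{fam1pol3} to the reduced polynomials \eqref{fam1pol4} involving only $25$ coefficients $\f(i)(j)$. I then perform the change of variables $X=t^{-6}$, $\Y(3)=t^{-6-3\tau}$, $\Y(4)=t^{-6-4\tau}$, $\Y(7)=t^{-6-7\tau}$, $\Y(8)=t^{-8-3\tau}$, clear denominators, and decompose each $F_i$ into its six partial sums $f_i^{(j)}$ according to the residue modulo $6$ of the exponent of $t$. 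Because the monomial curve itself satisfies the $F_i$, the substituted syzygies become identities in $t$, and collecting partial sums of equal residue converts each vanishing $R_k$ into a polynomial identity on the $\f(i)(j)$.

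In total this produces about $60$ polynomial identities, but many vanish trivially or coincide. The identities coming from the basis element $1$, equivalently from the residue class $i-j\equiv 0\pmod{6}$, turn out to be affine-linear in the nine coefficients $\f(i)(j)$ whose index satisfies $i-j\equiv 0\pmod 6$ and which are not coefficients of any $\Y(k)$; one can solve for these nine parameters, recovering exactly the formulas \eqref{without-y}. Substituting these expressions into the remaining identities arising from the residue classes $i-j\equiv 1,\ldots,5\pmod 6$ collapses the system to the five stated equations \eqref{baseeqns11}.

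The main obstacle is the bookkeeping of the lift--and--divide procedure for all $16$ syzygies: one has to choose a consistent ordering of the monomials so that the reduction terminates with a remainder in the free $\k[X]$-basis $\{1,\Y(3),\Y(4),\Y(7),\Y(8),\Y(3)\Y(8)\}$, keep careful track of the contribution of each correction term $\f(i)(j)F_k$, and then after the substitution $X=t^{-6}$ identify which of the resulting partial-polynomial equations are automatic or duplicated. The algebra at each step is elementary, but organising the elimination so that precisely five equations survive, in the exact form \eqref{baseeqns11}, requires performing the substitutions in the order suggested by \eqref{without-y}; a different order produces an equivalent but apparently different system, and reconciling the two is where the delicate verification lies.
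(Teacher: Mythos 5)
Your proposal is correct and follows essentially the same route as the paper: lift the $16$ syzygies (the eight in \eqref{syz} plus eight obtained from the Artinian quotient at $X=0$), impose the vanishing of the remainder coefficients, extract the linear relations to reach \eqref{fam1pol4}, pass to the local coordinate $t$ and the partial polynomials, and eliminate the coefficients with $i-j\equiv 0\pmod 6$ via \eqref{without-y} to collapse the roughly $60$ identities to the five stated equations. The only caveat is one of framing: the vanishing of the $R_k$ is not a consequence of flatness but rather the condition \emph{imposed} to obtain flatness, i.e.\ it is what produces the equations of the base space $\mathcal{T}^-$ in the first place.
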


The  partial polynomials  $\f(6)(4)$, $\f(7)(5)$ and $\f(8)(6)$ contain $\tau$ variables,
$\f(8)(1)$, $\f(12)(5)$,  $\f(11)(3)$, $\f(10)(2)$, $\f(12)(4)$ and  $\f(14)(6)$  
contain $\tau+1$ variables,
 $\f(12)(3)$, $\f(14)(5)$, $\f(15)(6)$,  $\f(14)(1)$, $\f(14)(4)$,   $\f(16)(2)$  contain $\tau+2$ variables
and $\f(16)(1)$   has $\tau+3$ variables. 
Using the remaining normalizations  $f_{8,1}=f_{12,4}=f_{8,6}=0$
the total number of variables is reduced by three to $16\tau +21$.

To understand the equations  \eqref{baseeqns11} we return to the $\f(i)(j)$ in terms of the variable $X$ and write the
equations somewhat differently.
\begin{equation}
\label{baseeqns12}
\begin{aligned}
(\f(16)(1)+X^2\f(8)(1))(X^\tau-\f(8)(6))&=\f(8)(1)(\f(15)(6)-X^2\f(8)(6))-\f(12)(3)\f(12)(4)-\f(16)(2)\f(12)(5)\\
(\f(14)(1)-X\f(8)(1))(X^\tau-\f(8)(6))&=\f(7)(5)\f(16)(2)-\f(8)(1)(\f(14)(6)-X\f(8)(6))+\f(6)(4)\f(12)(3)\\
\f(11)(3)(X^\tau-\f(8)(6))&=\f(7)(5)\f(12)(4)-\f(8)(1)\f(10)(2)-\f(6)(4)\f(12)(5)\\
(\f(14)(4)+X\f(12)(4)-X^2\f(6)(4))(X^\tau-\f(8)(6))&=\f(12)(4)(\f(14)(6)-X\f(8)(6))-\f(6)(4)(\f(15)(6)-X^2\f(8)(6))-\f(10)(2)\f(16)(2)\\
(\f(14)(5)-X\f(12)(5)+X^2\f(7)(5))(X^\tau-\f(8)(6))&=\f(7)(5)(\f(15)(6)-X^2\f(8)(6))-\f(12)(5)(\f(14)(6)-X\f(8)(6))-\f(10)(2)\f(12)(3).
\end{aligned}
\end{equation} 
All these equations are of the form
\[
L\cdot (X^\tau- \f(8)(6))=R
\]
with $L$ and $R$ polynomials in $X$ satisfying
$\deg_X(R)\leq \deg_X(L)+\tau$. Division with remainder gives
$R=Q(X^\tau- \f(8)(6))+\overline R$, and therefore we can solve $L=Q$ and
find as equations for the base space that the coefficients of $\overline R$
have to be zero. In other words, the condition leading to the equations of the base
space is that the right hand side of the equations \eqref{baseeqns12} is divisible by
$X^\tau -\f(8)(6)$. A similar structure first appeared for the base spaces of rational 
surface singularities of multiplicity four \cite{dJvS}.

 \begin{thm}\label{thm1}
 Let $\N$ be the semigroup generated by $6, 3+6\tau, 4+6\tau, 7+6\tau$ and $8+6\tau$ 
 where $\tau$ is a positive integer. 
 The $5\tau$ equations of the base space  $\mathcal{T}^-$  of the versal deformation
in negative degrees of the monomial curve
 $\C_{\N}$  are given by the condition on the 
 $11\tau+8$ coefficients $f_{ik}$ occurring in the polynomials $\f(i)(j)$,
 that the Pfaffians of the skew-symmetric matrix
\begin{equation}\label{pfaff}
\begin{pmatrix}
0&        \f(16)(2)&         \f(8)(1)&  \f(12)(4)&        \f(6)(4)\\         
-\f(16)(2)&0&                \f(12)(3)& \f(15)(6)-X^2\f(8)(6)&\f(14)(6)-X\f(8)(6)\\
-\f(8)(1)& -\f(12)(3)&        0&        \f(12)(5)&        \f(7)(5)\\         
-\f(12)(4)&-\f(15)(6)+X^2\f(8)(6)&-\f(12)(5)&0&               \f(10)(2)\\        
-\f(6)(4)& -\f(14)(6)+X\f(8)(6)&-\f(7)(5)& -\f(10)(2)&       0                
\end{pmatrix} 
\end{equation}
are divisible by $X^\tau -\f(8)(6)$. In particular, the dimension of 
$T^{1,-}({\k[\N]})$ is $11\tau+8$.
\end{thm}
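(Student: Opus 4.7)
The plan is to identify the right-hand sides of the five reformulated equations \eqref{baseeqns12} with the five $4 \times 4$ sub-Pfaffians of the matrix \eqref{pfaff}, and then to count the free coefficients after eliminating the five auxiliary partial polynomials that appear only on the left-hand sides.

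First, observe that each equation \eqref{baseeqns12} has the shape $L_k \cdot (X^\tau - \f(8)(6)) = R_k$, where $L_k$ is a polynomial in $X$ containing exactly one of the five auxiliary partials $\f(16)(1), \f(14)(1), \f(11)(3), \f(14)(4), \f(14)(5)$ together with explicit $X$-multiples of entries of \eqref{pfaff}. Since $X^\tau - \f(8)(6)$ is monic of degree $\tau$ in $X$, Euclidean division produces a unique decomposition $R_k = Q_k (X^\tau - \f(8)(6)) + \overline{R}_k$ with $\deg_X \overline{R}_k < \tau$. The equation holds if and only if $\overline{R}_k = 0$, in which case setting $L_k = Q_k$ determines the corresponding auxiliary partial. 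The vanishing of $\overline{R}_k$ is a polynomial identity in $X$ of degree less than $\tau$ with coefficients that are polynomial expressions in the matrix-entry partials, yielding $\tau$ scalar equations per index $k$, hence $5\tau$ in total.

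Second, I would verify by direct expansion that the five right-hand sides $R_k$ coincide, up to sign, with the five sub-Pfaffians of \eqref{pfaff}. For instance, the Pfaffian obtained by deleting the first row and column equals
\[
\f(12)(3) \f(10)(2) - \bigl(\f(15)(6) - X^2 \f(8)(6)\bigr) \f(7)(5) + \bigl(\f(14)(6) - X \f(8)(6)\bigr) \f(12)(5),
\]
which is exactly $-R_5$ from \eqref{baseeqns12}. Analogous matchings hold for the remaining four Pfaffians, so the defining equations of $\mathcal{T}^-$ are precisely the conditions that each of the five Pfaffians of \eqref{pfaff} be divisible by $X^\tau - \f(8)(6)$.

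For the dimension count, after elimination of the five auxiliaries the only free parameters are the scalar coefficients inside the eleven matrix-entry partials $\f(16)(2), \f(8)(1), \f(12)(4), \f(6)(4), \f(12)(3), \f(15)(6), \f(8)(6), \f(14)(6), \f(12)(5), \f(7)(5), \f(10)(2)$. Using the case-by-case formulas for $\rho$ recalled just after \eqref{fam1pol3}, three of these partials contain $\tau$ coefficients each, five contain $\tau + 1$, and three contain $\tau + 2$, giving $3\tau + 5(\tau+1) + 3(\tau+2) = 11\tau + 11$; the three remaining normalizations $f_{8,1} = f_{12,4} = f_{8,6} = 0$ reduce the count to $11\tau + 8$. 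Because each $R_k$ is quadratic in the partials, the remainders $\overline{R}_k$ have no linear part at the origin, so the Zariski tangent space to $\mathcal{T}^-$ at the origin equals the ambient affine $(11\tau + 8)$-space and $\dim T^{1,-}(\k[\N]) = 11\tau + 8$.

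The principal obstacle is the Pfaffian identification: the shifted entries $\f(15)(6) - X^2 \f(8)(6)$ and $\f(14)(6) - X \f(8)(6)$ are engineered precisely to absorb the $X$-multiples occurring in the expressions for $L_k$, so verifying the match requires careful sign tracking and a correct assignment of each Pfaffian to its equation. A secondary check is that the eight syzygies beyond those displayed in \eqref{syz}, obtained by lifting syzygies of the zero-dimensional truncation at $X = 0$, produce no new base-space equations beyond those already captured in \eqref{baseeqns11}.
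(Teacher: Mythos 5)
Your proposal is correct and follows essentially the same route as the paper's (very terse) proof: identify the right-hand sides of the rewritten equations \eqref{baseeqns12} with the sub-Pfaffians of \eqref{pfaff}, obtain $5\tau$ equations from the vanishing of the degree-$<\tau$ remainders after division by $X^\tau-\f(8)(6)$, and count the $11\tau+11$ coefficients of the eleven matrix-entry partials minus the three normalizations, concluding $\dim T^{1,-}=11\tau+8$ since the equations have no linear part. Your sample Pfaffian check (deleting the first row and column gives $-R_5$) and the coefficient tally ($3$ partials with $\tau$, $5$ with $\tau+1$, $3$ with $\tau+2$ coefficients) both agree with the paper.
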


\begin{proof}
The Pfaffians of the matrix give the same set of equations as the
the right hand side of the equations \eqref{baseeqns12}.  They involve only the
polynomials
$ \f(6)(4)$, $ \f(7)(5)$, $\f(8)(6)$, $ \f(8)(1)$, $ \f(12)(5)$, $ \f(10)(2)$, $\f(12)(4)$, $\f(14)(6)$, $   \f(12)(3)$, $ \f(15)(6)$ and $\f(16)(2)$, all others are eliminated. The divisibility condition
leads to $5\tau$ equations without linear parts. By counting the number of coefficients   and taking the remaining three normalizations
into account we find that the dimension of 
$T^{1,-}({\k[\N]})$ is $11\tau+8$.
\end{proof}

\begin{cor}
The isomorphism classes of the pointed complete 
 integral Gorenstein curves with Weierstrass semigroup $\N$ correspond bijectively to 
 the orbits of the $\mathbb{G}_m$-action on $ \mathcal{T}^-$ described in the theorem above.
\end{cor}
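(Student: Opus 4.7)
The plan is to recognize this corollary as the geometric interpretation of Theorem \ref{thm1} through Pinkham's construction, adapted to the non-minimal embedding $\C\hookrightarrow\mathbb{P}$ used throughout this section. Theorem \ref{thm1} identifies $\mathcal{T}^-$ as the base space of the miniversal equivariant deformation of $\C_\N$ in negative weight; Pinkham's Theorem \ref{pinkhamthm}, together with its extension to integral (not only smooth) fibers via the fiberwise compactification $\overline{\mathcal{X}}^-\to\mathcal{T}^-$ recalled in Section \ref{newbound}, then translates this into a statement about the moduli of pointed Gorenstein curves. The remark following Theorem \ref{teo3} already confirms that working in a non-minimal embedding does not change the moduli-theoretic content of the base space.

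For the forward direction, I would start with a pointed Gorenstein curve $(\C,P)$ of Weierstrass semigroup $\N$. Choosing rational functions $x,y_3,y_4,y_7,y_8$ as at the beginning of the section gives the embedding $\C\hookrightarrow\mathbb{P}$, and by Lemma \ref{lem11} the ideal of $\C$ is generated by nine isobaric polynomials of the form \eqref{fam1pol2}. The linear coordinate transformations listed before \eqref{normfam1} are then used to impose the normalizations \eqref{normfam1} and $f_{8,1}=f_{12,4}=f_{8,6}=0$, after which the only remaining coordinate freedom is the $\mathbb{G}_m$-action $x_{n_i}\mapsto c^{n_i}x_{n_i}$. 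Because all sixteen syzygies of $\C_\N$ must lift to genuine syzygies between the $F_i$ on $\C$, the resulting coefficients $f_{ij}$ automatically satisfy the linear relations leading to \eqref{without-y} and the Pfaffian divisibility conditions of Theorem \ref{thm1}, producing a well-defined point of $\mathcal{T}^-$.

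For the inverse direction, a point of $\mathcal{T}^-$ gives polynomials \eqref{fam1pol4} that by construction satisfy all sixteen syzygies, hence define a flat deformation of $\C_\N$ over $\mathcal{T}^-$. Fiberwise compactification in $\mathbb{P}$ yields a family of complete integral curves with a unique smooth point $P$ at infinity and Weierstrass semigroup $\N$ there. Gorensteinness propagates from the Gorenstein central fiber $\C_\N$ (symmetric semigroup) across the flat family in negative weight, and integrality is an open condition satisfied on $\C_\N$ and preserved along $\mathbb{G}_m$-orbits; this produces the desired pointed Gorenstein curve. Taking $\mathbb{G}_m$-orbits then gives the inverse of the construction of the previous paragraph.

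The main obstacle is verifying that the two constructions are indeed mutually inverse bijections modulo the $\mathbb{G}_m$-action, which amounts to showing that the normalizations \eqref{normfam1} and $f_{8,1}=f_{12,4}=f_{8,6}=0$ are both achievable and unique for every pointed Gorenstein curve with semigroup $\N$. This should be handled by an inductive argument on weights in the spirit of \cite[Proposition 3.1]{St93}, exploiting that the coordinate transformations listed before \eqref{normfam1} are parameterized by constants whose weights are precisely tailored to the generators $6,\,3+6\tau,\,4+6\tau,\,7+6\tau,\,8+6\tau$ of $\N$, so that at each weight the number of available parameters matches the number of coefficients to be killed, with exactly one scalar dimension of residual freedom giving the $\mathbb{G}_m$-action.
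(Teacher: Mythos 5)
Your proposal is correct and follows essentially the route the paper intends: the corollary is stated without a separate proof because it is the direct translation of Theorem \ref{thm1} through the machinery already set up, namely the Contiero--Stoehr bijection (Theorem \ref{teo3}) adapted to the weighted projective embedding, Pinkham's Theorem \ref{pinkhamthm} with its extension to integral Gorenstein fibers via the fiberwise compactification, and the normalization/uniqueness argument of \cite[Proposition 3.1]{St93}. Your filling in of the two directions and of the residual $\mathbb{G}_m$-freedom matches what the paper relies on implicitly.
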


\begin{thm}\label{negsmoot1}
The monomial curve $C_\N$ is negatively smoothable. Therefore the 
moduli space $\M$ is non-empty.
\end{thm}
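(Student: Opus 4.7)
The strategy is to exhibit an explicit closed point of the base space $\mathcal{T}^-$ described in Theorem \ref{thm1} whose corresponding fibre is a smooth curve. By Pinkham's Theorem \ref{pinkhamthm} this immediately implies that $\M = (\mathcal{T}^-)_s / \mathbb{G}_m$ is non-empty, since smoothness of fibres is a Zariski-open condition on $\mathcal{T}^-$.

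The first step is to select a concrete tuple $(f_{ik})$ in $\mathbb{A}^{11\tau+8}$ satisfying the $5\tau$ divisibility conditions: the five Pfaffians of the skew-symmetric matrix \eqref{pfaff} must be divisible by $X^\tau - \f(8)(6)$. A natural simplification is to take $\f(8)(6) = 0$, reducing the condition to divisibility by $X^\tau$. One then arranges the other entries of the matrix so that each $4\times 4$ Pfaffian vanishes to order at least $\tau$ in $X$, for instance by prescribing that certain polynomials in the second row and column (such as $\f(14)(6)$, $\f(15)(6)$, and the other factors entering the Pfaffian expansion) have a suitably high order of vanishing at $X=0$ while keeping the remaining entries generic. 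This gives a candidate point of $\mathcal{T}^-$.

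The second step is to verify smoothness of the fibre over this chosen point. Smoothness at the marked point $P=(0:\cdots:0:1)\in\proj$ is automatic from the construction, since $P$ has the unibranch local ring determined by $\N$, with uniformizer $t$ satisfying $x = t^{-6}$ and $y_j = t^{-(j+6\tau)}$. At affine points, smoothness reduces to the Jacobian criterion applied to the nine polynomials $F_i$ of \eqref{fam1pol4}: the $9\times 5$ Jacobian matrix must have rank $4$ everywhere. For the chosen specialisation this is a finite computation, which one can either carry out directly or convert into verifying that a rational parametrization of the normalisation is an immersion.

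The main obstacle is producing a specialisation that is simultaneously simple enough to allow an explicit Jacobian (or parametrization) check and generic enough to yield a smooth curve for every $\tau \geq 1$; \emph{a priori} different $\tau$ could require different choices. The Pfaffian structure of Theorem \ref{thm1} together with the eliminations of \eqref{without-y} reduce the bookkeeping substantially, and the divisibility pattern suggests that a uniform choice --- where the Pfaffians factor as $(X^\tau-\f(8)(6))$ times an irreducible polynomial defining a smooth curve --- is available. Once one such smooth fibre is exhibited, $(\mathcal{T}^-)_s$ is a non-empty Zariski-open subset of $\mathcal{T}^-$, and Pinkham's theorem delivers $\M \neq \emptyset$.
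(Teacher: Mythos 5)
Your overall strategy coincides with the paper's: exhibit one explicit point of $\mathcal{T}^-$ whose fibre is smooth and invoke Pinkham's Theorem~\ref{pinkhamthm}. But as written your argument has a genuine gap, and you name it yourself: you never actually produce the specialisation, and you never carry out the smoothness check. You end by asserting that a uniform choice for all $\tau$ ``is available'' --- that assertion is precisely the content of the theorem, so the proof is missing its two essential steps. Moreover, the concrete suggestion you do make, namely $\f(8)(6)=0$ so that the Pfaffians of \eqref{pfaff} must vanish to order $\tau$ at $X=0$, points in an unpromising direction: forcing everything to degenerate over $X=0$ is exactly how one stays close to the (singular) monomial curve, and it is not clear such a point of $\mathcal{T}^-$ with smooth fibre exists at all along that locus.

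The paper does the opposite. It sets all deformation variables to zero \emph{except} the highest-weight (constant) coefficients of $\f(8)(6)$, $\f(14)(6)$ and $\f(15)(6)$, call them $a$, $b$, $c$, all nonzero. Then every Pfaffian of \eqref{pfaff} vanishes identically, so the divisibility condition of Theorem~\ref{thm1} is trivially satisfied, and after the eliminations \eqref{without-y} the nine equations \eqref{fam1pol4} factor uniformly in $\tau$ into the binomial-like form \eqref{smoothcurve}, with $X^\tau$, $X^{\tau+1}$, $X^{\tau+2}$ replaced by $X^\tau-a$, $X^{\tau+1}-b$, $X^{\tau+2}-c$. Choosing $a,b,c$ so that these three polynomials are pairwise coprime, the paper then runs an explicit Jacobian argument: a $3\times3$ minor forces $Y_8=0$ at any singular point, whence $Y_4=Y_7=Y_3=0$ and finally $(X^{\tau+1}-b)(X^\tau-a)=0$ together with $X^{\tau+2}-c=0$, contradicting coprimality. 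To repair your proposal you would need to (i) replace ``keep the remaining entries generic'' by an actual solution of the base equations valid for every $\tau$ --- the all-Pfaffians-zero point above is the natural one --- and (ii) perform the rank computation on the $9\times 5$ Jacobian rather than deferring it as ``a finite computation'', since it is the coprimality of $X^\tau-a$, $X^{\tau+1}-b$, $X^{\tau+2}-c$, not genericity in the abstract, that makes the fibre smooth.
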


\begin{proof}
The equations \eqref{pfaff} can be solved by setting all variables equal to zero
except for those involved in $\f(8)(6)$, $\f(14)(6)$ and $\f(15)(6)$. We take only the
variables of highest weight non-zero. Using \eqref{without-y} we find the following
generators of the ideal:
\begin{equation}\label{smoothcurve}
\begin{array}{lll}
  Y_3^2-(X^{\tau+1}-b)(X^\tau-a),&  Y_3Y_4-(X^\tau-a)Y_7,&  Y_4^2-(X^\tau-a)Y_8,\\
  Y_3Y_7-(X^{\tau+1}-b)Y_4,& Y_4Y_7-Y_3Y_8,& Y_4Y_8-(X^{\tau+2}-c)(X^\tau-a),\\
  Y_7^2-(X^{\tau+1}-b)Y_8,& Y_7Y_8-(X^{\tau+2}-c)Y_3,& Y_8^2-(X^{\tau+2}-c)Y_4,
\end{array}
\end{equation}
where $a$, $b$ and $c$ are non-zero constants such that the polynomials 
$X^{\tau}-a$, $X^{\tau+1}-b$ and $X^{\tau+2}-c$ have pairwise no common roots.
To show that this curve is smooth we compute the Jacobi-matrix of the above system
of equations. First consider the submatrix
\[
\begin{pmatrix}
-\frac{\partial (X^{\tau+1}-b)(X^\tau-a)}{\partial X} & 2 Y_3 & 0 & 0 & 0 \\
-\frac{\partial (X^{\tau+2}-c)(X^\tau-a)}{\partial X} & 0 &  Y_8 & 0 & Y_4 \\
-(\tau+2)X^{\tau+1}Y_3 & -(X^{\tau+2}-c) & 0  & Y_8 & Y_7\\
-(\tau+2)X^{\tau+1}Y_4 & 0 &  -(X^{\tau+2}-c) & 0& 2Y_8
\end{pmatrix}
\]
The lower right $3\times3$ subdeterminant is equal to
$Y_8(2Y_8^2+(X^{\tau+2}-c)Y_4)$, which is congruent to $3Y_8^3$ modulo
the last equation. A necessary condition for a singular point is therefore
$Y_3Y_8=0$. With the other equations it then follows that $Y_7Y_8=0$
and therefore $Y_7=(X^{\tau+1}-b)Y_8=0$. From the submatrix again
we also get that $X^\tau(X^{\tau}-a)Y_8=0$.
Because $\gcd(X^\tau(X^{\tau}-a),X^{\tau+1}-b)=1$ it follows that $Y_8=0$
and then $Y_4=0$. The Jacobi-matrix simplifies, and we can conclude
that $X^{\tau+2}-c=Y_3=0$. But then also
$(X^{\tau+1}-b)(X^\tau-a)=0$, contradicting the choice of $a$, $b$ and $c$.
Therefore this curve provides a negative smoothing. By Pinkham's theorem
$\M$ is then non-empty. 
\end{proof}

\comment
 \f(6)(4), \f(7)(5), \f(8)(6), \f(8)(1), \f(12)(5), \f(10)(2), \f(12)(4),  \f(14)(6),   \f(12)(3), \f(15)(6) ,  \f(16)(2)  

 $f_{8,1}=f_{12,4}=f_{15,6}=0$.

\endcomment

Although we get explicit equations for the base space, it is difficult to understand 
the structure of this space, or even to find its dimension.  We note that the 
dimension is equal to the dimension of the tangent cone. Among the equations 
of the tangent cone we have the quadratic part of our $5\tau$ equations.
They define  an affine quadratic 
cone
$\mathcal{Q}\subset\mathbb{A}^{11\tau+8}$, 
that contains the tangent cone. In this way we get an upper bound for the
dimension; this is the method
presented in \cite[Section 3]{c2013}.

Note  that division by $X^\tau -\f(8)(6)$ introduces higher order monomials
involving the coefficients of $\f(8)(6)$. So to obtain the quadratic part
of the equations we have to divide by $X^\tau$.
We introduce the $\tau$-dimensional Artinian algebra $k[X]/(X^{\tau})$.
\begin{thm}\label{teo12}
The quadratic quasi-cone $\mathcal{Q}$ is isomorphic to the direct product
\[
\mathcal{Q}=M\times N,
\]
where $M$ is the $(\tau+8)$-dimensional weighted affine space of weights $2, 2, 3, 5, 6, 6, 8, 9, 12$ 
and $6i, i=1,\ldots,\tau-1$, and $N$ is the quadratic quasi-cone consisting of vectors
\[
(\omega_1,\ldots,\omega_{10})=\left(\displaystyle\sum_{j=0}^{\tau-1}\omega_{1j}X^j,\ldots,
\displaystyle\sum_{j=0}^{\tau-1}\omega_{10,j}X^j\right),
\]
such that the Pfaffians of the matrix
\[
\begin{pmatrix}
0&        \omega_1&         \omega_2&  \omega_3&        \omega_4\\         
-\omega_1&0&               \omega_5& \omega_6&\omega_7\\
-\omega_2& -\omega_5&        0&       \omega_8&    \omega_9\\         
-\omega_3&-\omega_6&-\omega_8&0&          \omega_{10}\\        
-\omega_4& -\omega_7&-\omega_9& -\omega_{10}&       0                
\end{pmatrix} 
\]
are zero in the artinian algebra $k[X]/(X^{\tau})$.
\end{thm}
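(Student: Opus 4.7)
The plan is to unwind the quadratic part of the $5\tau$ equations defining $\mathcal{T}^-$ in Theorem \ref{thm1} and recognize it as the Pfaffian locus over $k[X]/(X^\tau)$ described in the statement.

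First I would observe that at the quadratic level the divisor $X^\tau-\f(8)(6)$ can be replaced by $X^\tau$. The five Pfaffians of the matrix \eqref{pfaff} are quadratic in the matrix entries, which in turn are linear in the coordinates on $\mathbb{A}^{11\tau+8}$; any correction in the division algorithm arising from the term $\f(8)(6)$ would multiply a quadratic expression by a linear quantity and therefore only contribute cubic or higher terms. Hence the quadratic part of the $5\tau$ defining equations of $\mathcal{Q}$ is precisely the vanishing, in $k[X]/(X^\tau)$, of the five Pfaffians of the matrix \eqref{pfaff} viewed as a skew-symmetric matrix over this Artinian ring.

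Next I would exhibit a linear change of coordinates that makes the product structure manifest. For each partial polynomial $\f(i)(j)=\sum_{k=0}^{\rho}\ff(i)(j+6k)X^{\rho-k}$ appearing as a matrix entry, the coefficients $\ff(i)(j+6k)$ with $\rho-k\ge\tau$ lie in the kernel of reduction modulo $X^\tau$ and therefore decouple from the quadratic equations. This produces no new variable for the three entries with $\rho=\tau-1$, one free variable for each of the five entries with $\rho=\tau$ (of which two are killed by the normalizations $\ff(8)(1)=\ff(12)(4)=0$), and two free variables for each of the three entries with $\rho=\tau+1$, accounting for the nine free weights $2,2,3,5,6,6,8,9,12$. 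The two entries containing $\f(8)(6)$, namely $\f(15)(6)-X^2\f(8)(6)$ and $\f(14)(6)-X\f(8)(6)$, are dealt with by the triangular change of coordinates
\[
\ff(15)(j)\longmapsto \ff(15)(j)-\ff(8)(j),\qquad \ff(14)(j)\longmapsto \ff(14)(j)-\ff(8)(j)
\]
for the indices $j$ in the common range of definition. After this substitution the combined entries involve only the new $\f(15)(6)$- and $\f(14)(6)$-coefficients, and the $\tau-1$ coefficients of $\f(8)(6)$ become genuinely free, contributing the remaining summands of the weighted affine space $M$.

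Finally, letting $\omega_i$ denote the image in $k[X]/(X^\tau)$ of the $i$-th matrix entry of \eqref{pfaff}, taken in the order of the matrix displayed in the theorem, the $10\tau$ coefficients $\omega_{i,j}$ form a set of coordinates complementary to the $\tau+8$ decoupled variables, and the matrix \eqref{pfaff} reduces to the matrix of Theorem \ref{teo12}. The $5\tau$ quadratic equations of $\mathcal{Q}$ then become exactly the Pfaffian conditions defining $N$, and do not involve any decoupled variable, yielding the isomorphism $\mathcal{Q}\cong M\times N$.

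The main obstacle is the careful bookkeeping of the triangular change of coordinates involving $\f(8)(6)$: one has to verify that after the substitution every coefficient of $\f(8)(6)$ is genuinely absent from every $\omega_i\bmod X^\tau$, and that the weights of the resulting free variables match the list prescribed in the statement. The rest of the argument is a direct dimension count and comparison of Pfaffian equations.
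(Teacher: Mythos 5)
Your argument is correct and follows essentially the same route as the paper's (very terse) proof: reduce the Pfaffian equations modulo $X^\tau$ (legitimate at the quadratic level since dividing by $X^\tau-\f(8)(6)$ instead of $X^\tau$ only perturbs the remainder by terms of order at least three), check that after a unipotent substitution absorbing $\f(8)(6)$ into $\f(14)(6)$ and $\f(15)(6)$ the ten truncated entries of \eqref{pfaff} are generic degree-$(\tau-1)$ polynomials in $10\tau$ independent coordinates, and let the $\tau+8$ coefficients killed by the truncation sweep out the free factor $M$. The paper compresses the second step into the phrase ``by an obvious substitution,'' so your explicit bookkeeping of which coefficients decouple and of their weights is a faithful expansion of the intended proof rather than a different one.
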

\begin{proof}
We first put $X^\tau=0$ in the matrix \eqref{pfaff}. Then we have a 
skew-symmetric matrix whose ten entries above the diagonal are polynomials
of degree $\tau-1$ with in total $10\tau$ linearly independent coefficients, so a 
generic matrix of this type. By an obvious substitution we obtain the matrix 
in the statement.
\end{proof}

By the computation in  the proof of \cite[Cor. 4.5]{c2013} the dimension of
$N$ is $7\tau$. Using the lower bound from Theorem  \ref{Stevens} we obtain 
\begin{cor}\label{cor13}
We have $\dim\mathcal{Q}=8\tau+8$. The moduli space $\M$ 
is of pure dimension $8\tau+7$.
\end{cor}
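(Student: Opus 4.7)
The plan is to trap $\dim\M$ (and each of its components) between matching upper and lower bounds, handling the two assertions of the corollary separately. The equality $\dim\mathcal{Q}=8\tau+8$ follows immediately from Theorem~\ref{teo12} — which decomposes $\mathcal{Q}=M\times N$ with $\dim M=\tau+8$ — together with the Pfaffian calculation in the proof of \cite[Cor.~4.5]{c2013} that yields $\dim N = 7\tau$.

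For the upper bound on $\dim\M$, I would first observe that the $5\tau$ equations cutting out $\mathcal{T}^-$ in Theorem~\ref{thm1} have no linear part: each right-hand side in \eqref{baseeqns12} is a sum of products of at least two of the $f_{ij}$, and division by $X^\tau - f_8^{(6)}$ only creates terms of degree at least two in the $f_{ij}$ (the lowest-order coefficients of the remainder $\overline{R}$ coincide with the lowest-order coefficients of the right-hand side, which are already quadratic). Their quadratic parts therefore cut out exactly $\mathcal{Q}$, so the tangent cone of $\mathcal{T}^-$ at the origin sits inside $\mathcal{Q}$, giving $\dim\mathcal{T}^-\le\dim\mathcal{Q}=8\tau+8$. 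Pinkham's Theorem~\ref{pinkhamthm} then yields $\dim\M\le 8\tau+7$.

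For the matching lower bound I would invoke Theorem~\ref{Stevens}: since $\N$ is symmetric we have $\lambda(\N)=1$, and every irreducible component $X$ of $\M$ satisfies $\dim X\ge 2g-2+1-\dim\mathrm{T}^{1,+}(\k[\N])$. Rather than compute $\dim\mathrm{T}^{1,+}$ case-by-case via Theorem~\ref{t1busc}, I would lean on Proposition~\ref{lbound}, which guarantees that the Stevens bound dominates Pflueger's bound $3g-2-\ewt(\N)$. It then suffices to compute $\ewt(\N)=10\tau$ by a direct count, stratifying the gaps of $\N$ by residue class modulo~$6$ and recording, for each gap $\l$, how many of the five generators $6,\,3+6\tau,\,4+6\tau,\,7+6\tau,\,8+6\tau$ lie strictly below~$\l$. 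With $g=3+6\tau$ this yields Pflueger's bound $9+18\tau-2-10\tau=8\tau+7$, matching the upper bound. Combined with Theorem~\ref{negsmoot1} (which ensures $\M\ne\emptyset$), every component of $\M$ then has dimension exactly $8\tau+7$, so $\M$ is of pure dimension $8\tau+7$. The one step requiring genuine care will be verifying that the Theorem~\ref{thm1} equations truly have vanishing linear part — so that $\mathcal{Q}$ really contains the tangent cone of $\mathcal{T}^-$ rather than merely an ambient quadratic approximation; the $\ewt$ computation itself is routine bookkeeping.
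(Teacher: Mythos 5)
Your proposal is correct and follows the same sandwich structure as the paper: the upper bound $\dim\M\leq 8\tau+7$ comes from the quadratic cone $\mathcal{Q}=M\times N$ of Theorem \ref{teo12} (with $\dim N=7\tau$ from \cite[Cor.~4.5]{c2013}) containing the tangent cone of $\mathcal{T}^-$, and the matching lower bound comes from Theorem \ref{Stevens} together with non-emptiness from Theorem \ref{negsmoot1}. The only point where you diverge is in evaluating the lower bound: rather than computing $\dim\mathrm{T}^{1,+}(\k[\N])=4\tau-2$ and plugging it into $2g-1-\dim\mathrm{T}^{1,+}$, you route through Proposition \ref{lbound} and Pflueger's bound, computing $\ewt(\N)=10\tau$ so that $3g-2-\ewt(\N)=8\tau+7$. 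This is legitimate and your count of $\ewt(\N)$ checks out; it succeeds here precisely because the correction term $\sum_{\l\notin\End(\N)}\dim V_\l$ in the proof of Proposition \ref{lbound} vanishes for this family (every relation has weight $d_i\geq 12\tau+6$, so $d_i+\l$ always exceeds the Frobenius number $12\tau+5$ and lies in $\N$, forcing $V_\l=0$), making the two lower bounds coincide. Your flagged concern about the equations having no linear part is settled in the proof of Theorem \ref{thm1}, which states exactly that the divisibility condition yields $5\tau$ equations without linear parts; so the quadratic parts are indeed the initial forms and $\mathcal{Q}$ genuinely contains the tangent cone.
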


 \subsection{A second family}

 We apply the same method as above for the following particular family
 of symmetric semigroups. For each $\tau\geq 1$, let
 
 \begin{align*}
\N&=\langle 6, 1+6\tau, 2+6\tau, 3+6\tau, 4+6\tau\rangle\\
&=\mathbb{N}\sqcup\bigsqcup_{j\in\{1,2,3,4\}}^{}(j+6\tau+6\mathbb{N})
\sqcup(5+12\tau+6\mathbb{N}),
\end{align*}
be a symmetric semigroup of genus $g=6\tau$ generated minimally by five elements. 

\begin{rmk}
If $\tau=1$, the  ideal of the canonical monomial
curve $\C_{\N}\subset \proj^5$ can not be generate by quadratic forms only, see
the recent preprint by Contiero and Fontes  \cite{CF}.
In this case there are two natural compactifications of the monomial
curve in $A^5$: in $\proj^5$ the equations of the next lemma define the ideal
of the canonical monomial curve; it is a trigonal curve whose ideal
can be generated by 6 quadratic and 3 cubic equations.  For
Pinkham's construction we compactify the affine curve in a weighted
projective space with weights $(1,6,7,8,9,10)$, and the results below are 
also valid for $\tau=1$.
\end{rmk}

 Since the methods are the same
as in the preceding subsection, we do not give  proofs.
We introduce  variables $X, Y_1, Y_2, Y_3, Y_4$ of weight
$6, 1+6\tau,  2+6\tau, 3+6\tau, 4+6\tau$. Similar to Lemma \ref{lem11}
we have

\begin{lem}\label{lem21} The ideal of the affine monomial curve
  \[
\C_{\N}=\{(t^6, t^{1+6\tau}, t^{2+6\tau}, t^{3+6\tau}, t^{4+6\tau}):t\in\k\}
\]
is generated by the  forms $F_i^{(0)}$ and $G_i^{(0)}$
\begin{align*}
F_{2}^{(0)}&= Y_1^2-X^{\tau}Y_2,&  
F_{3}^{(0)}&= Y_1Y_2-X^{\tau}Y_3,&  
F_{4}^{(0)}&=Y_1Y_3-X^{\tau}Y_4,\\
G_{4}^{(0)}&= Y_2^2-X^{\tau}Y_4,& 
F_{5}^{(0)}&=Y_1Y_4-Y_2Y_3,& 
F_{6}^{(0)}&=Y_2Y_4-X^{2\tau+1},\\
G_{6}^{(0)}&= Y_3^2-X^{2\tau+1},& 
F_{7}^{(0)}&=Y_3Y_4-X^{\tau+1}Y_1,&
F_{8}^{(0)}&=Y_4^2-X^{\tau+1}Y_2.
\end{align*}
   \end{lem}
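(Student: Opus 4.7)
The plan is to mirror the proof of Lemma \ref{lem11} almost verbatim, with the combinatorial input being the decomposition of $\N$ into the six residue classes modulo $6$, represented by the monomials $1$, $\Y(1)$, $\Y(2)$, $\Y(3)$, $\Y(4)$ and $\Y(2)\Y(3)$. The inclusion $\mathcal{I}\subseteq I(\C_{\N})$ of the ideal $\mathcal{I}$ generated by the nine forms is immediate from the substitution $X=t^6$, $\Y(j)=t^{j+6\tau}$. What requires work is the reverse inclusion, and for that I would show that modulo $\mathcal{I}$ every polynomial in $\k[X,\Y(1),\Y(2),\Y(3),\Y(4)]$ is equivalent to a $\k[X]$-linear combination of the six listed monomials.

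The key step is a reduction of quadratic monomials in the $\Y(i)$. Eight of the nine relations replace such a monomial by one of strictly smaller $Y$-degree: $F_2^{(0)},F_3^{(0)},F_4^{(0)},G_4^{(0)}$ rewrite $\Y(1)^2,\Y(1)\Y(2),\Y(1)\Y(3),\Y(2)^2$, while $F_6^{(0)},G_6^{(0)},F_7^{(0)},F_8^{(0)}$ rewrite $\Y(2)\Y(4),\Y(3)^2,\Y(3)\Y(4),\Y(4)^2$; the ninth relation $F_5^{(0)}$ exchanges $\Y(1)\Y(4)$ for $\Y(2)\Y(3)$. Thus the only surviving quadratic monomial modulo $\mathcal{I}$ is $\Y(2)\Y(3)$. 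Higher-degree monomials are then reduced by induction on total $Y$-degree: any $\Y(k)\cdot \Y(2)\Y(3)$ can be re-associated as $(\Y(k)\Y(2))\Y(3)$ or $(\Y(k)\Y(3))\Y(2)$ and collapsed by one of the eight relations.

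With this normal form, every class in $\k[X,\Y(1),\Y(2),\Y(3),\Y(4)]/\mathcal{I}$ is represented by a sum $\sum_n c_n Z_n$ in the basis $X^i$, $X^i\Y(j)$ ($j=1,2,3,4$) and $X^i\Y(2)\Y(3)$, whose weights run bijectively through the elements of $\N$ (the residue mod $6$ picks out the class of basis monomial, the power of $X$ pins down the element within the class). If $f$ vanishes on $\C_{\N}$, then taking the corresponding linear combination of rational functions $\sum_n c_n x_n$ in $\k(\C_{\N})$ gives a function identically zero, and the distinct pole orders at the unique point at infinity force every $c_n$ to vanish, hence $f\in\mathcal{I}$.

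The one place that needs verification rather than being purely formal is the inductive reduction of cubic monomials containing $\Y(2)\Y(3)$: one must check that after re-association and a single application of a relation no new unreduced quadratic appears. A direct case-by-case inspection settles this, e.g.\ $\Y(1)\cdot\Y(2)\Y(3)\equiv X^{\tau}\Y(3)^2\equiv X^{3\tau+1}$, $\Y(2)\cdot\Y(2)\Y(3)\equiv X^{\tau}\Y(3)\Y(4)\equiv X^{2\tau+1}\Y(1)$, and $\Y(4)\cdot\Y(2)\Y(3)\equiv X^{2\tau+1}\Y(3)$; this is the precise analogue of the check carried out for Lemma \ref{lem11}, and I expect no further obstacle.
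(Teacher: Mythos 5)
Your argument is correct and is exactly the adaptation of the paper's proof of Lemma \ref{lem11} that the authors have in mind (they omit the proof here precisely because ``the methods are the same as in the preceding subsection''): the nine binomials eliminate all quadratic monomials $\Y(i)\Y(j)$ except $\Y(2)\Y(3)$, the induction on $Y$-degree yields the normal form $\sum c_nZ_n$ over the six residue classes mod $6$, and the pairwise distinct pole orders at infinity force the coefficients to vanish. Your explicit check that cubics containing $\Y(2)\Y(3)$ collapse correctly is a worthwhile verification, but it introduces nothing beyond the paper's route.
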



We unfold the  forms $F_i^{(0)}$ and $G_i^{(0)}$ with terms of lower degree:

\begin{equation}
\label{fam2pol3}
\begin{aligned}
F_2 &=  \Y(1)^2-X^\tau  \Y(2)+ \f(2)(1)  \Y(1)+ \f(2)(2)+ \f(2)(4)  \Y(4)+ \f(2)(5)  \Y(3)+ \f(2)(6)  \Y(2)\\
F_3 &=  \Y(1)  \Y(2)-X^\tau  \Y(3)+ \f(3)(1)  \Y(2)+ \f(3)(2)  \Y(1)+ \f(3)(3)+ \f(3)(5)  \Y(4)+ \f(3)(6)  \Y(3)\\
F_4 &=  \Y(1)  \Y(3)-X^\tau  \Y(4)+ \f(4)(1)  \Y(3)+ \f(4)(2)  \Y(2)+ \f(4)(3)  \Y(1)+ \f(4)(4)+ \f(4)(6)  \Y(4)\\
G_4 &=  \Y(2)^2-X^\tau  \Y(4)+ \g(4)(1)  \Y(3)+ \g(4)(2)  \Y(2)+ \g(4)(3)  \Y(1)+ \g(4)(4)+ \g(4)(6)  \Y(4)\\
F_5 &=  \Y(1)  \Y(4)- \Y(2)  \Y(3)+ \f(5)(1)  \Y(4)+ \f(5)(2)  \Y(3)+ \f(5)(3)  \Y(2)+ \f(5)(4)  \Y(1)+ \f(5)(5)\\
F_6 &=  \Y(2)  \Y(4)-X^{2\tau+1}+ \f(6)(1)  \Y(2)  \Y(3)+ \f(6)(2)  \Y(4)+ \f(6)(3)  \Y(3)+ \f(6)(4)  \Y(2)+ \f(6)(5)  \Y(1)+ \f(6)(6)\\
G_6 &=  \Y(3)^2-X^{2\tau+1}+ \g(6)(1)  \Y(2)  \Y(3)+ \g(6)(2)  \Y(4)+ \g(6)(3)  \Y(3)+ \g(6)(4)  \Y(2)+ \g(6)(5)  \Y(1)+ \g(6)(6)\\
F_7 &=  \Y(3)  \Y(4)-X^{\tau+1}  \Y(1)+ \f(7)(1)+ \f(7)(2)   \Y(2)  \Y(3)+ \f(7)(3)  \Y(4)+ \f(7)(4)  \Y(3)+ \f(7)(5)  \Y(2)+ \f(7)(6)  \Y(1)\\
F_8 &=  \Y(4)^2-X^{\tau+1}  \Y(2)+ \f(8)(1)  \Y(1)+ \f(8)(2)+ \f(8)(3)  \Y(2)  \Y(3)+ \f(8)(4)  \Y(4)+ \f(8)(5)  \Y(3)+ \f(8)(6)  \Y(2)
\end{aligned}
\end{equation}
Here $ \f(i)(j)$ or $ \g(i)(j)$  is a polynomial in $X$ with deformation variables as
coefficients, of  degree $2\tau$ in $X$ if $i=j$,
$2\tau+1$ if $i=j+6$,  degree 0 if $i=j+5$ and otherwise 
of degree $\tau+\lfloor \frac{i-j}6\rfloor$ in $X$.

We normalize the coefficients
\[
\f(6)(1)= \g(6)(1)= \f(7)(2)= \f(8)(3)=0,\quad  \f(4)(1)= \f(5)(2)= \f(7)(3)= \f(5)(4)=0.
\]
We have three normalizations left, which we leave for later on.
These are the following: we can make $f_{2,6}$, $f_{3,6}$ or $f_{8,6}$
equal to zero, we can make $f_{2,1}$, $g_{4,1}$ or $f_{8,1}$
equal to zero and we can make $f_{4,2}$ or $f_{5,2}$ 
equal to zero (in fact also other coefficients, but they will turn out to be equal
to one of these).

With these normalizations the equations become similar to the normalized
equations \eqref{fam1pol3}. In fact, if we replace $Y_1$ by $Y_7$ and 
$Y_2$ by $Y_8$, and rename the equations accordingly:
\[
\begin{array}{llllllllll}
\text{equations  \eqref{fam2pol3}:}&
F_2 & F_3 & F_4 & G_4 & F_5 & F_6 & G_6 & F_7&  F_8\\
\text{equations  \eqref{fam1pol3}:}&
F_{14} & F_{15} & F_{10} & F_{16} & F_{11} & F_{12} & F_6 & F_7&  F_8
\end{array}
\]
then the equations become identical as written  but the symbols $\f(i)(j)$
have a slightly different meaning, and the powers of $X$ are somewhat different.
The equations for the partial polynomials in $t$ become the same. 
Therefore we find the equations for the base space by
renaming the partial polynomials in \eqref{baseeqns11}, and
we find
\begin{equation}
\label{baseeqns21}
\begin{aligned}
\g(4)(1)(1-\f(8)(6))+\f(8)(1)(1-\f(3)(6))+\f(6)(3)\f(6)(4)+\g(4)(2)\f(6)(5)&=0\\
\f(8)(1)(1-\f(2)(6))-\f(2)(1)(1-\f(8)(6))+\f(7)(5)\g(4)(2)+\g(6)(4)\f(6)(3)&=0\\
\f(5)(3)(1-\f(8)(6))+\f(8)(1)\f(4)(2)-\f(7)(5)\f(6)(4)+\g(6)(4)\f(6)(5)&=0\\
\g(6)(4)(1-\f(3)(6))-\f(2)(4)(1-\f(8)(6))-\f(6)(4)(1-\f(2)(6))-\f(4)(2)\g(4)(2)&=0\\
\f(7)(5)(1-\f(3)(6))+\f(2)(5)(1-\f(8)(6))-\f(6)(5)(1-\f(2)(6))+\f(4)(2)\f(6)(3)&=0.
\end{aligned}
\end{equation}
These equations, written with the variable $X$, are less suited as the term we have to divide with is
$X^{\tau+1}-\f(8)(6)$. We replace the system of equations with
an equivalent one, where the second and third equation are the following:
\begin{align*}
\g(4)(1)(1-\f(2)(6))+\f(2)(1)(1-\f(3)(6))-\f(6)(3)\f(2)(4)+\g(4)(2)\f(2)(5)&=0\\
\f(5)(3)(1-\f(3)(6))-\g(4)(1)\f(4)(2)+\f(6)(4)\f(2)(5)+\f(2)(4)\f(6)(5)&=0
\end{align*}
We now can use division with $X^{\tau}-\f(3)(6)$. We write
\begin{equation}
\label{baseeqns22}
\begin{aligned}
(\f(8)(1)+\g(4)(1))(X^\tau-\f(3)(6))&=\g(4)(1)(\f(8)(6)-X\f(3)(6))-\f(6)(3)\f(6)(4)-\g(4)(2)\f(6)(5)\\
(\g(4)(1)+\f(2)(1))(X^\tau-\f(3)(6))&=\g(4)(1)(\f(2)(6)-\f(3)(6))+\f(6)(3)\f(2)(4)-\g(4)(2)\f(2)(5)\\
\f(5)(3)(X^\tau-\f(3)(6))&=\g(4)(1)\f(4)(2)-\f(6)(4)\f(2)(5)-\f(2)(4)\f(6)(5)\\
(\g(6)(4)-X\f(2)(4)-\f(6)(4))(X^\tau-\f(3)(6))&=-\f(2)(4)(\f(8)(6)-X\f(3)(6))-\f(6)(4)(\f(2)(6)-\f(3)(6))+\f(4)(2)\g(4)(2)\\
(\f(6)(5)-\f(7)(5)-\f(2)(5))(X^\tau-\f(3)(6))&=-\f(2)(5)(\f(8)(6)-X\f(3)(6))+\f(6)(5)(\f(2)(6)-\f(3)(6))+\f(4)(2)\f(6)(3)
\end{aligned}
\end{equation} 
We write the right hand side as Pfaffians of the following skew-symmetric matrix
\begin{equation}\label{pfaff2}
\begin{pmatrix}
0&        \g(4)(2)&         \g(4)(1)&  \f(6)(4)&        -\f(2)(4)\\         
-\g(4)(2)&0&                \f(6)(3)& \f(8)(6)-X\f(3)(6)&\f(2)(6)-\f(3)(6)\\
-\g(4)(1)& -\f(6)(3)&        0&        \f(6)(5)&        \f(2)(5)\\         
-\f(6)(4)&-\f(8)(6)+X\f(3)(6)&-\f(6)(5)&0&               \f(4)(2)\\        
\f(2)(4)& -\f(2)(6)+\f(3)(6)&-\f(2)(5)& -\f(4)(2)&       0                
\end{pmatrix} 
\end{equation}

We conclude that the base space has the same structure as for the 
first family.

\begin{thm}\label{thm2}
 Let $\N$ be the semigroup generated by $6, 1+6\tau, 2+6\tau, 3+6\tau$ and $4+6\tau$ 
 where $\tau$ is a positive integer. 
 The $5\tau$ equations of the base space  $ \mathcal{T}^-$  of the versal deformation of the monomial curve
 $\C_{\N}$ in negative degrees are given by the condition on the 
 $11\tau+4$ coefficients  occurring in the matrix \eqref{pfaff2}
 that the 
 Pfaffians of this  matrix
are divisible by $X^\tau -\f(3)(6)$. In particular, the dimension of 
$T^{1,-}({\k[\N]})$ is $11\tau+4$.
\end{thm}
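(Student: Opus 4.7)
The plan is to mirror the proof of Theorem \ref{thm1} step by step, exploiting the explicit dictionary between equations \eqref{fam2pol3} and \eqref{fam1pol3} supplied in the text. Using Lemma \ref{lem21}, I would first write down the sixteen syzygies among the initial binomials $F_i^{(0)}, G_i^{(0)}$: eight principal ones analogous to \eqref{syz}, plus the additional ones recovered by lifting syzygies of the Artinian quotient at $X = 0$. Substituting the unfolded $F_i, G_i$ from \eqref{fam2pol3} into each syzygy and applying the division algorithm with respect to the nine quadratic leading monomials produces tails whose coefficients lie in the free $k[X]$-basis $\{1, Y_1, Y_2, Y_3, Y_4, Y_2 Y_3\}$ of $k[\N]$. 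Vanishing of these coefficients yields sixteen linear relations among the $\f(i)(j), \g(i)(j)$ together with five nonlinear ones.

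Next I would pass to the coordinate $t$ at infinity by setting $X = t^{-6}$, $Y_j = t^{-(j+6\tau)}$, clearing denominators and decomposing each resulting polynomial according to residue classes modulo $6$, exactly as in the first family. The sixteen linear relations then allow me to eliminate every partial polynomial $\f(i)(j)$ with $i - j \equiv 0 \pmod 6$, in direct analogy with \eqref{without-y}. Under the renaming $(F_2, F_3, F_4, G_4, F_5, F_6, G_6, F_7, F_8) \leftrightarrow (F_{14}, F_{15}, F_{10}, F_{16}, F_{11}, F_{12}, F_6, F_7, F_8)$ combined with $Y_1 \leftrightarrow Y_7$ and $Y_2 \leftrightarrow Y_8$, the system of five nonlinear equations becomes formally identical to \eqref{baseeqns11} and so yields exactly \eqref{baseeqns21}.

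At this point the argument diverges slightly from the first family: because the $X$-degrees of the relevant partial polynomials have shifted, the natural divisor here is $X^{\tau} - \f(3)(6)$ rather than $X^{\tau} - \f(8)(6)$. I would replace the second and third equations of \eqref{baseeqns21} by the equivalent linear combinations displayed in the text to put the system in the form \eqref{baseeqns22}, each reading $L \cdot (X^{\tau} - \f(3)(6)) = R$ with $\deg_X R \le \deg_X L + \tau$. Division with remainder then reduces the base space equations to the requirement that each right-hand side $R$ be divisible by $X^{\tau} - \f(3)(6)$, producing $5\tau$ quasi-homogeneous relations with no linear part. A direct inspection confirms that these five right-hand sides are precisely the five Pfaffians of the skew-symmetric matrix \eqref{pfaff2}.

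The dimension statement is then a bookkeeping step: counting the free coefficients of the eleven partial polynomials appearing as entries of \eqref{pfaff2} according to their prescribed $X$-degrees, and subtracting the three remaining coordinate-change normalizations on $f_{2,6}$ or its siblings, on $f_{2,1}$ or its siblings, and on $f_{4,2}$ or $f_{5,2}$, gives $11\tau + 4$. The main obstacle I expect is the careful verification that the Pfaffians of \eqref{pfaff2} match the right-hand sides of \eqref{baseeqns22} on the nose and that no additional obstruction sneaks in from the eight auxiliary syzygies lifted from $X = 0$; both reduce, however, to routine but lengthy polynomial manipulations entirely parallel to those carried out in the proof of Theorem \ref{thm1}.
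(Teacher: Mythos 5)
Your proposal follows exactly the route the paper intends: the paper explicitly declines to write out a proof ("the methods are the same as in the preceding subsection"), relying on the renaming dictionary between \eqref{fam2pol3} and \eqref{fam1pol3}, the passage to \eqref{baseeqns21}, the switch to division by $X^\tau-\f(3)(6)$ via the equivalent system \eqref{baseeqns22}, and the coefficient count minus the three remaining normalizations. Your outline reproduces each of these steps faithfully, including the correct final count $11\tau+4$, so it is correct and essentially identical in approach.
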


As for the curves in the first family we can show that the monomial curve 
is negatively smoothable.

\begin{cor}\label{boundfam2}
The  dimension of the affine cone $\mathcal{Q}_\N$
given by the quadratic part of the equations is $8\tau + 4$
and $\M$ has pure dimension $8\tau+3$.
\end{cor}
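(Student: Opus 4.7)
The plan is to mirror the proof of Corollary \ref{cor13}, exploiting the structural parallel between equations \eqref{baseeqns22} and \eqref{baseeqns12}: in both cases the base space $\mathcal{T}^-$ is defined by the condition that the five Pfaffians of a generic skew-symmetric $5 \times 5$ matrix be divisible by a polynomial of the form $X^\tau - (\cdot)$. To extract the quadratic part of these equations I would replace $X^\tau - \f(3)(6)$ by $X^\tau$ on the left hand sides of \eqref{baseeqns22}, or equivalently reduce the matrix \eqref{pfaff2} modulo $X^\tau$. After this reduction the ten entries of the matrix above the diagonal become independent polynomials of degree at most $\tau - 1$, with in total $10\tau$ linearly independent coefficients, exactly as in the setup of Theorem \ref{teo12}.

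Next I would establish a product decomposition $\mathcal{Q}_\N \cong M' \times N$ analogous to Theorem \ref{teo12}, where $N \subset \AF^{10\tau}$ is the quadratic cone cut out by the five Pfaffians of the generic skew-symmetric $5 \times 5$ matrix over the Artinian algebra $\k[X]/(X^\tau)$, and $M'$ is a weighted affine space of dimension $(11\tau + 4) - 10\tau = \tau + 4$ parametrising the remaining deformation variables (those top-degree coefficients of the entries of \eqref{pfaff2} killed by the reduction $X^\tau = 0$, together with the variables outside the matrix after the three remaining normalizations are imposed). By the dimension computation in the proof of \cite[Cor. 4.5]{c2013} we have $\dim N = 7\tau$, so
\[
\dim \mathcal{Q}_\N = 7\tau + (\tau + 4) = 8\tau + 4.
\]
Since $\mathcal{Q}_\N$ contains the tangent cone of $\mathcal{T}^-$ at the origin, every irreducible component of $\mathcal{T}^-$ through the origin has dimension at most $8\tau + 4$, and quotienting by the $\mathbb{G}_m$-action gives $\dim X \leq 8\tau + 3$ for every component $X$ of $\M$.

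For the matching lower bound I would invoke Theorem \ref{Stevens}. Negative smoothability of $\C_\N$ follows from the analogue for the second family of Theorem \ref{negsmoot1} (indicated in the discussion between Theorem \ref{thm2} and the corollary), so $\M$ is nonempty. It then suffices to compute $\lambda(\N)$ and $\dim T^{1,+}(\k[\N])$ for the semigroup $\N = \langle 6, 1+6\tau, 2+6\tau, 3+6\tau, 4+6\tau\rangle$ of genus $g = 6\tau$ via Theorem \ref{t1busc}, and verify that
\[
2g - 2 + \lambda(\N) - \dim T^{1,+}(\k[\N]) = 8\tau + 3.
\]
Combined with the upper bound this forces every component of $\M$ to have dimension exactly $8\tau + 3$, yielding pure dimension $8\tau + 3$. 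The main obstacle will be the explicit verification of this identity, which requires identifying $\End(\N) \setminus \N$ and the vector subspaces $V_\l$ for $\l \notin \End(\N)$, together with the careful bookkeeping of how the three remaining coordinate-change normalizations interact with the entries of \eqref{pfaff2} in the product decomposition.
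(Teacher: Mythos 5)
Your proposal is correct and follows essentially the same route as the paper: the paper proves Corollary \ref{boundfam2} exactly by transporting the argument for the first family, i.e.\ reducing the matrix \eqref{pfaff2} modulo $X^\tau$ to get a product decomposition $\mathcal{Q}_\N\cong M'\times N$ with $\dim N=7\tau$ by \cite[Cor.~4.5]{c2013} and $\dim M'=(11\tau+4)-10\tau=\tau+4$, then matching the resulting upper bound $8\tau+3$ with the lower bound of Theorem \ref{Stevens} after establishing negative smoothability as in Theorem \ref{negsmoot1}. The one computation you defer — that $\lambda(\N)=1$ (immediate from symmetry) and $\dim T^{1,+}(\k[\N])=4\tau-4$ so that $2g-2+\lambda(\N)-\dim T^{1,+}=8\tau+3$ with $g=6\tau$ — is likewise left implicit in the paper, so your plan reproduces its argument faithfully.
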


\begin{rmk}
Using the upper bound obtained by Contiero and Stoehr in \cite[Cor. 4.5]{c2013}
for the symmetric semigroup generated minimally by $6, 2+6\tau, 3+6\tau, 4+6\tau$ and $5+6\tau$ for $\tau\geq 1$ we find that also in this case $\M$ has pure dimension
$2g-1-\dim\mathrm{T}^{1,+}=8\tau+7$, if non-empty.
\end{rmk}

\bigskip

\parbox[t]{3in}{{\rm Andr\'e Contiero}\\
{\tt \href{mailto:contiero@ufmg.br}{contiero@ufmg.br}}\\
{\it Universidade Federal de Minas Gerais}\\
{\it Belo Horizonte, MG, Brazil}} \hspace{0.9cm}
\parbox[t]{3in}{{\rm Aislan L. Fontes}\\
{\tt \href{mailto:ailfontes@hotmail.com }{aislan@ufs.br}}\\
{\it Universidade Federal de Sergipe}\\
{\it Itabaiana, SE, Brazil}} 

\bigskip

\parbox[t]{3in}{{\rm Jan Stevens}\\
{\tt \href{mailto:stevens@chalmers.se}{stevens@chalmers.se}}\\
{\it Department of Mathematical Sciences,\\ Chalmers University of
Technology and University of Gothenburg.\\
SE 412 96 Gothenburg, Sweden}} \hspace{0.9cm}
\parbox[t]{3in}{{\rm Jhon Quipse Vargas}\\
{\tt \href{jhon.quispev@gmail.com }{jhon.quispev@gmail.com }}\\
{\it Universidade Federal de Goias}\\
{\it Catal\~ao, GO, Brazil}} 
 
 \end{document}